\pgfplotsset{compat=newest} 
\pgfplotsset{plot coordinates/math parser=false} 
\newlength\figureheight 
\newlength\figurewidth 
\newtheorem{theorem}{Theorem}
\newtheorem{definition}[theorem]{Definition}
\newtheorem{lemma}[theorem]{Lemma}
\newtheorem{property}[theorem]{Property}
\newtheorem{remark}[theorem]{Remark}
\newcommand{\setX}{\mathcal{X}}
\newcommand{\setY}{\mathcal{Y}}
\newcommand{\setB}{\mathcal{B}}
\newcommand{\Rn}{\mathcal{R}}
\newcommand{\I}{\mathbb{I}}
\newcommand{\U}{\mathbb{U}}
\newcommand{\R}{\mathbb{R}}
\newcommand {\bmat} {\left[\begin{array} }
\newcommand {\emat} {\end{array}\right]}
\DeclareMathOperator{\inti}{int}
\DeclareMathOperator{\tot}{tot}
\newenvironment{pf*}[1][Proof:]{\noindent \textbf{#1} }{}
\newcommandx{\mara}[2][1=]{\todo[linecolor=red,backgroundcolor=red!25,bordercolor=red,#1]{#2}}
\newcommandx{\otro}[2][1=]{\todo[linecolor=blue,backgroundcolor=blue!25,bordercolor=blue,#1]{#2}}
\newcommandx{\ale}[2][1=]{\todo[linecolor=OliveGreen,backgroundcolor=OliveGreen!25,bordercolor=OliveGreen,#1]{#2}}
\newcommandx{\pablo}[2][1=]{\todo[linecolor=Plum,backgroundcolor=Plum!25,bordercolor=Plum,#1]{#2}}
\newcommandx{\marce}[2][1=]{\todo[linecolor=lime,backgroundcolor=lime!25,bordercolor=lime,#1]{#2}}
\definecolor{ao_english}{rgb}{0.0, 0.5, 0.0}
\definecolor{cagreen}{rgb}{0.12, 0.3, 0.17}
\definecolor{cgreen}{rgb}{0.0, 0.42, 0.24}
\definecolor{calpolypomonagreen}{rgb}{0.12, 0.3, 0.17}
\definecolor{armygreen}{rgb}{0.29, 0.33, 0.13}
\definecolor{darkpastelred}{rgb}{0.76, 0.23, 0.13}
\definecolor{burntumber}{rgb}{0.54, 0.2, 0.14} 
\definecolor{brown(web)}{rgb}{0.65, 0.16, 0.16}
\definecolor{bostonuniversityred}{rgb}{0.8, 0.0, 0.0}
\definecolor{cornellred}{rgb}{0.7, 0.11, 0.11}
\definecolor{blue(pigment)}{rgb}{0.2, 0.2, 0.6}
\definecolor{ceruleanblue}{rgb}{0.16, 0.32, 0.75}
\definecolor{darkblue}{rgb}{0.0, 0.0, 0.55}
\numberwithin{equation}{section}
\numberwithin{theorem}{section}
\begin{document}
\begin{frontmatter}

\title{Optimal control strategies to tailor antivirals for acute infectious diseases in the host}

\author[First]{Mara Perez} 
\author[First]{Pablo Abuin}
\author[Second]{Marcelo Actis}
\author[Third]{Antonio Ferramosca} 
\author[Forth]{Esteban A. Hernandez-Vargas}
\author[First]{Alejandro H. Gonzalez} 

\address[First]{Institute of Technological Development for the Chemical Industry (INTEC), CONICET-UNL, Santa Fe, Argentina.}
\address[Second]{Facultad de Ingeniería Química (FIQ), Universidad Nacional del Litoral (UNL) and Consejo Nacional de Investigaciones científicas y técnicas (CONICET), Santa Fe, Argentina.}
\address[Third]{Department of Management, Information and Production Engineering, University of Bergamo Via Marconi 5, Dalmine (BG) 24044, Italy.}
\address[Forth]{Instituto de Matematicas, Unidad Juriquilla, UNAM, Mexico}

\begin{abstract}
	 Several mathematical models in SARS-CoV-2 have shown how target-cell model can help to understand the spread of the virus in the host and how potential candidates of antiviral treatments can help to control the virus. Concepts as equilibrium and stability show to be crucial to qualitative determine the best alternatives to schedule drugs, according to effectivity in inhibiting the virus infection and replication rates. 
	 Important biological events such as rebounds of the infections (when antivirals are incorrectly interrupted) can also be explained by means of a dynamic study of the target-cell model. In this work a full characterization of the dynamical behavior of the target-cell models under control actions is made and, based on this characterization, the optimal fixed-dose antiviral schedule that produces the smallest amount of dead cells (without viral load rebounds) is computed. Several simulation results - performed by considering real patient data - show the potential benefits of both, the model characterization and the control strategy.
\end{abstract}
\begin{keyword}
	In-host acute infection model, Equilibrium sets characterization, Stability analysis, Model predictive control.
\end{keyword}
\end{frontmatter}

\section{Introduction}\label{sec:intro}
%
Mathematical models of with-in infections can be used to characterize pathogen dynamics, optimize drug delivery, uncover biological parameters (including pathogen and infected cell half-lives), design clinical trials, among others. They have been employed to study chronic (\textit{i.e.:} HIV\cite{perelson1993dynamics,legrand2003vivo,perelson2013modeling}, hepatitis B\cite{ciupe2007modeling,herrmann2000hepatitis}, hepatitis C\cite{neumann1998hepatitis,canini2014viral}) and acute (\textit{i.e.:} influenza \cite{baccam2006kinetics,smith2011influenza,hernandez2019passivity}, dengue\cite{nikin2015role,nikin2018modelling}, Ebola\cite{Nguyen15}) infections.  
Currently, they are based on ordinary differential equations (ODE), which allows to analyze these systems employing mathematical and computational tools. This way, in-host basic reproduction numbers ($\Rn$), stability analysis of equilibrium states, analytical/numerical solutions, can be computed \cite{van2017reproduction,murase2005stability,smith2003virus,abuin2020dynamical}. 
Most of them are based on the target-cell limited model to represent chronic/acute infections according to the infection resolution respect to the target cell production and natural death rates \cite{ciupe2017host}. This way, the equilibrium states differ from isolated equilibrium points (\textit{i.e.:} disease free and infected equilibria) for the former to a continuous of equilibrium points (\textit{i.e.:} disease free equilibrium set) for the latter ones. Note that for acute infections, the only feasible equilibria is the disease free, since the pathogen particles at the end of infection will be cleared independently of the in-host reproduction number \cite{ciupe2017host,baccam2006kinetics,cao2017mechanisms}. The existence of healthy equilibrium set implies that stability analysis can be performed considering equilibrium sets as a generalization of equilibrium points, which gives an environment to employ set-theoretic methods \cite{rawlings2017model,blanchini2008set}, widely used in the design of set-based controllers, although not fully employed for modeling characterization and control of acute infections. Some preliminary results, which will be discuss later in this chapter, can be found in \cite{abuin2020char}.

The control of infection can be modelled considering immune response mechanisms, where the infection is self-controlled by a combination of a non-specific and specific reactions \cite{eftimie2016mathematical,hernandez2019modeling,dobrovolny2013assessing}, or by drug therapies. The inclusion of pharmacokinetic (PK) and pharamacodynamic (PD) models of drug therapies allows the inclusion of therapeutic effects on the pathogen evolution \cite{canini2014viral,ciupe2017host}. Therefore, the models parameters can be changed exogenously by dose frequency and quantity, naturally limited by the inhibitory potential of the drug (expressed in terms of EC50,
or drug concentration for inhibiting $50\%$ of antigen particles) and its cytotoxic effect (expressed in terms of
IC50, or drug concentration which causes death to $50\%$ of susceptible cells) \cite{vergnaud2005assessing}.  Moreover, since drugs are normally administrated by pills or intravenous injections, instantaneous jumps are observed in the concentration of the drug in some tissues. This is mathematically conceptualized as a discontinuity of the first kind and gives rise to the so-called impulsive control systems \cite{rivadeneira2017control}. This model representation has been used for optimal control and state-feedback control with constraints for infectious diseases, such as: influenza \cite{hernandez2019passivity,HernandezMejia17} and HIV \cite{rivadeneira2012impulsive,rivadeneira2017control}. Even though optimal dosage can be computed for chronic and acute models, the unstable healthy equilibrium of the former (under certain circumstances; for details, see \cite{ciupe2017host,murase2005stability}) and the availability of target cells above a critical level for the latter (as it is discuss later),  involve the duration of drug therapy, with the presence of viral rebounds when therapy is disrupted. This effect has been noticed for chronic \cite{dahari2007modeling} and acute  \cite{dobrovolny2011neuraminidase} infections. Taking
into account this scenario, in this work, we formalize the existence of an optimal single interval drug delivery such that viral rebounds are avoided. Even though, the presented analysis is valid for the target-cell limited model for acute infections,  taking into account the current worldwide contextual situation (COVID-19 pandemic), we prove our results using an identified model of infected patients with SARS-CoV-2 virus \cite{wolfel2020virological,abuin2020char,vargas2020host}. 

After the introduction given in Section~\ref{sec:intro} the article is organized as follows. 
Section~\ref{sec:infectmod} presents the general "in the host" models used to represent infectious diseases. Section~\ref{sec:pkpd} studies the way the antivirals affect the dynamic of the model, emphasizing the fact that the stability analysis made in Section~\ref{sec:equil_stab} remains unmodified and, so, any control strategy must be designed accounting for these details. In Section~\ref{sec:control} control design able to exploit the stability model characterization is introduced, and its benefits are shown by simulating several cases, in Section~\ref{sec:simres}. Finally, conclusions are given in Section~\ref{sec:conc}. 

\subsection{Notation}\label{sec:notation}

First let us introduce some basic notation. We consider $\R^n$ as $n$-dimensional Euclidean space equipped with the euclidean distance between two points defined by $d(x,y):=\|x-y\|=[(x-y)'(x-y)]^{1/2}$. 
The euclidean distance from a point $x$ to a set $\setY$ is given by $d(x,\setY):= \|x\|_\setY=\inf\{y\in\setY: \|x-y\|\}$. 

With $\setX$ we will denote the constraint set of $\R^3$, given by
\[\setX:=\R^3_{\geq0}=\{(x_1,x_2,x_3)\in\R^3: x_1\geq0, x_2\geq0 \text{ and } x_3\geq0\}\]
We will consider $\setX$ endowed with the \emph{inherit topology} of $\R^3$, i.e. the open sets are intersections of open set of $\R^3$ with $\setX$. Thus, a open ball in $\setX$ with center in $x$ and radius $\varepsilon>0$ is given by {$\setB_\varepsilon(x):=\{y \in \setX: \|x- y\|<\varepsilon\}$} and an $\varepsilon$-neighborhood of set $\setY\subset\setX$ is given by $\setB_{\varepsilon}(\setY):=\{x\in\setX: \|x\|_\setY<\varepsilon\}$.
Let $x \in\setY$, we say that $x$ is an interior point of $\setY$ if the there exist $\varepsilon > 0$ such
that $\setB_\varepsilon(x) \subseteq \setY$. The interior of $\setY$ is the set of all interior points of $\setY$ and it is denoted by $\inti(\setY)$.

\section{Review of the UIV target-cell-limited model}\label{sec:infectmod}

Mathematical models of in-host virus dynamic have shown to be useful to understand of the interactions that govern infections and, more important, to allows external intervention to moderate their effects \cite{hernandez2019modeling}. According to recent research in the area \cite{ciupe2017host,abuin2020char}, the following ordinary differential equations (ODEs) are used in this work to describe the interaction between uninfected target or susceptible cells $U$ [cell/mm$^3$], 
infected cells $I$ [cell/mm$^3$], and virus $V$ [copies/mL]:
\begin{subequations}\label{eq:SysOrig}
	\begin{align}
	&\dot{U}(t)   =  -\beta U(t) V(t),~~~~~~~ U(0) = U_0,\\
	&\dot{I}(t) =  \beta U(t) V(t) - \delta I(t),~~~~~~~I(0) = I_0,\\
	&\dot{V}(t)   = pI(t) - c V(t),~~~~~~~ V(0) = V_0,
	\end{align}
\end{subequations}
where $\beta$ [mL.day$^{-1}/$copies] is the infection rate of healthy $U$ cells by external virus $V$, $\delta$ [day$^{-1}$] is the death rates of $I$, $p$ [(copies.mm$^3/$cell.mL).day$^{-1}$] is the production rate of free virus from infected cells $I$, and $c$ [day$^{-1}$] is degradation (or clearance) rate of virus $V$ by the immune system.

System \eqref{eq:SysOrig} is positive, which means that $U(t) \geq 0$, $I(t) \geq 0$ and $V(t) \geq 0$, for all $t\geq0$. We denote $x(t):=(U(t),I(t),V(t))$ the state vector, and	$\setX = \R^3_{\geq0}$ the state constraints set. 

The initial conditions of (\ref{eq:SysOrig}), which represent a healthy steady state before the infection, are assumed to be $V(t) = 0$, $I(t) = 0$, and $U(t)= U_0>0$, for $t<0$. Then, at time $t=0$, a small quantity of virions enters the host body and, so, a discontinuity occurs in $V(t)$. Indeed, $V(t)$ jumps from $0$ to a small positive value $V_0$ at $t=0$ (formally, $V(t)$ has a discontinuity of the first kind at $t_0$, \textit{i.e.}, $\lim_{t\to0^-} V(t)=0$ while $\lim_{t\to0^+} V(t)=V_0>0$). \\

Although the solution of \eqref{eq:SysOrig} for $t \geq t_0$, being $t_0$ an arbitrary time, is unknown, we know that it depends on the \textbf{basic reproduction number}\footnote{The reproduction number is usually defined as $\Rn=U(t_0)\frac{\beta p}{c \delta}>0$, for UIV-type models. However, for the sake of convenience, we remove the initial value of $U$ in our definition.} $\Rn=\frac{\beta p}{c \delta}$ and the initial conditions $(U(t_0),I(t_0),V(t_0))\in \setX$. Since $U(t) \geq 0$, $V(t) \geq 0$, for all $t \geq t_0$, $U(t)$ is a non increasing function of $t$ (by~\ref{eq:SysOrig}.a). From \cite{abuin2020char} and \cite{abuin2020dynamical}, if $c>>\delta$ (as it is always the case\footnote{If $c>>\delta$. system \eqref{eq:SysOrig} can be approximated by $\dot{U}(t) \approx -\beta U(t) V(t)$, $\dot{V}(t) \approx  (\frac{\beta p}{c} U(t) - \delta) V(t)$, $I(t)=\frac{c}{p} V(t)$. Then, since $U(t_0)>0$, conditions for $V$ to increase or decrease at $t_0$, are given by $U(t_0) \Rn > 1$ and $U(t_0) \Rn < 1$, respectively.}) it is known that for $U(t_0)\Rn \leq 1$, then $V(t)$ is a non increasing function of $t$, for all $t \geq t_0$, and goes asymptotically to zero for $t\rightarrow \infty$. On the other hands, if $U(t_0)\Rn>1$, $V(t)$ reaches a maximum $\hat V$ and then goes asymptotically to zero, for $t\rightarrow \infty$. In this latter case it is said that the virus \textbf{spreads in the host}, since there is at least one time instant for which $\dot{V}>0$ \cite{abuin2020char}.
The so called \textbf{critical value of} $U$, $U^*$, is defined as
\begin{eqnarray}\label{eq:Ustar}
	U^*:= 1/\Rn,
\end{eqnarray}
where $\Rn$ is assumed to remain constant for all $t\geq t_0$.
The critical value $U^*$ can be seen as the counterpart of the "herd immunity" in the epidemiological SIR-type models: i.e., $U(t)$ reaches $U^*$ approximately at the same time as $V(t)$ and $I(t)$ reach their peaks or, in other words, $V(t)$ and $I(t)$ cannot increase anymore once $U(t)$ is below $U^*$. This way, conditions $U(t_0)\Rn>1$ and $U(t_0)\Rn \leq 1$ that determines if $V(t)$ increases or decreases for $t\geq t_0$ can be rewritten as $U(t_0)>U^*$ and $U(t_0) \leq U^*$, respectively. In what follows, we assume that $U(0)>U^*$ (or $U(0)\Rn>1$), which corresponds to the case of the outbreak of the infection (i.e., the virus does spread in the host), at time $t=0$.\\

Let us now define $U_\infty:= \lim_{t\rightarrow \infty} U(t)$, $V_\infty:= \lim_{t \rightarrow \infty} V(t)$ and $I_\infty:= \lim_{t\rightarrow \infty} I(t)$, which are values that depend on $\Rn$ and the initial conditions $U(t_0),V(t_0)$, $I(t_0)$. According to \cite{abuin2020char}, $V_\infty=I_\infty=0$, while $U_\infty$ is a value in $(0,U(t_0))$, which will be characterized in the next section.

\section{Equilibria characterization and stability} \label{sec:equil_stab}

To find the equilibrium set of model \eqref{eq:SysOrig}, with initial conditions $(U(t_0),V(t_0),I(t_0)) \in \setX$ at an arbitrary time $t_0 \geq 0$, $\dot{U}(t)$, $\dot{I}(t)$ and $\dot{V}(t)$ need to be equaled to zero, in \eqref{eq:SysOrig}. According to \cite{abuin2020char,abuin2020dynamical} there is only one equilibrium set in $\setX$, which is a healthy one, and it is defined by 
\begin{eqnarray}\label{eq:equilset}
	\setX_s := \{(U,I,V)\in \setX :~I=0,~V=0\},
\end{eqnarray}
%

To examine the stability of the equilibrium points in $\setX_s$, a first attempt consists in linearizing system \eqref{eq:SysOrig} at some state $x_s:=(U_s,I_s,V_s) \in \setX_s$, and analyzing the eigenvalues of the Jacobian matrix. As it is shown in \cite{abuin2020char}, this matrix has one eigenvalue at zero ($\lambda_1 =0$), one always negative ($\lambda_2<0$) and a third one, $\lambda_3$, that is negative, zero or positive depending on if $U_s$ is smaller, equal of greater than $U^*$, respectively.

Since the maximum eigenvalue $\lambda_3$ is the one determining the stability of the system, it is possible to separate set $\setX_s$ into two subsets, according to its behaviour. Then, a first intuition is that the equilibrium subset 
\begin{eqnarray}\label{ec:setXs1}
\setX_s^{st} &:=& \{(U,I,V)\in \setX : U \in [0,U^*],~I=0,~V=0\}
\end{eqnarray}
is stable, and that the equilibrium subset 
\begin{eqnarray}\label{ec:setXs2}
\setX_s^{un} &:=& \{(U,I,V)\in \setX: U \in (U^*,+\infty),~I=0,~V=0\},
\end{eqnarray}
is unstable. However, this is not a conclusive analysis, given that one of the eigenvalues of the linearized system is null and so the linear approximation cannot be used to fully determine the stability of a nonlinear system (Theorem of Hartman-Grobman \cite{perko2013differential}). 
Formal asymptotic stability of set $\setX_s^{st}$, together with its corresponding domain of attraction, is analyzed in the next subsection. 

\subsection{Asymptotic stability of the equilibrium sets}\label{sec:assest}

A key point to properly analyze the asymptotic stability (AS) of system \eqref{eq:SysOrig} is to consider the stability of the equilibrium sets $\setX_s^{st}$ and $\setX_s^{un}$, instead of the single points inside them (as defined in Definitions~\ref{def:attrac_set},~\ref{def:eps_del_stab}
and~\ref{def:AS}, in Appendix~\hyperlink{sec:app1}{1}). Indeed, even when every equilibrium point in $\setX_s^{st}$ is $\epsilon-\delta$ stable, there is no single equilibrium point in such set that is locally attractive. 

As stated in Definition~\ref{def:AS}, in Appendix~\hyperlink{sec:app1}{1}, the AS of $\setX_s^{st}$ requires both, attractivity and $\epsilon-\delta$ stability, which are stated in the next two subsections, respectively. Finally, in Subsection~\ref{sec:ASproof} the AS theorem is formally stated.

\subsection{Attractivity of set $\setX_s^{st}$}\label{sec:attrac}

According to Definition~\ref{def:attrac_set}, any set containing an attractive set is also attractive. So, we are in fact interested in finding the smallest closed attractive set in $\setX\backslash\setX_s^{un}$. 
\begin{theorem}[Attractivity of $\setX_s^{st}$]\label{theo:attract}
	Consider system \eqref{eq:SysOrig} constrained by $\setX$.
	Then, the set $\setX_s^{st}$ defined in \eqref{ec:setXs1} is the smallest attractive set in $\setX\backslash\setX_s^{un}$. Furthermore, $\setX_s^{un}$, defined in \eqref{ec:setXs2}, is not attractive.
\end{theorem}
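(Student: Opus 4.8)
The plan is to establish attractivity by analyzing the long-time behavior of the trajectories via the conserved/monotone quantities already identified in the excerpt, and then show minimality and the non-attractivity of $\setX_s^{un}$ by exhibiting the limit point explicitly.

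**First I would** recall the structural facts from Section~\ref{sec:infectmod}: along any trajectory starting in $\setX$, $U(t)$ is non-increasing and bounded below by $0$, hence $U_\infty:=\lim_{t\to\infty}U(t)$ exists in $[0,U(t_0)]$; and by the cited result $V_\infty=I_\infty=0$. This already shows every trajectory converges to a point $(U_\infty,0,0)\in\setX_s$, so $\setX_s$ is attractive. The real work is to show the limit actually lands in $\setX_s^{st}$, i.e. $U_\infty\le U^*$, whenever the initial condition is in $\setX\backslash\setX_s^{un}$. I would argue by contradiction: if $U_\infty>U^*$, then for large $t$ we have $U(t)>U^*$, and using the $c\gg\delta$ reduction $\dot V\approx(\frac{\beta p}{c}U-\delta)V=\frac{\beta p}{c}(U-U^*)V$ (or, more rigorously, a comparison/Lyapunov argument directly on \eqref{eq:SysOrig}), the factor $U(t)-U^*$ stays bounded away from $0$, forcing $V$ not to decay to $0$ — unless $V$ were already identically $0$ from some time on. But if $V\equiv 0$ on an interval then $\dot U\equiv 0$ and $\dot I=-\delta I$, so the trajectory is already in $\setX_s$; combined with $U(t_0)>U^*$ this would put the whole trajectory in $\setX_s^{un}$, contradicting the assumption that we start in $\setX\backslash\setX_s^{un}$. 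Hence $U_\infty\le U^*$ and the limit point lies in $\setX_s^{st}$, giving attractivity of $\setX_s^{st}$ on $\setX\backslash\setX_s^{un}$.

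**Next, for minimality**, I would show no proper closed subset $\setS\subsetneq\setX_s^{st}$ is attractive in $\setX\backslash\setX_s^{un}$. Take any $U^\dagger\in[0,U^*]$ with $(U^\dagger,0,0)\notin\setS$; pick an initial condition with $U(t_0)$ slightly above $U^\dagger$, a tiny $V(t_0)>0$ and $I(t_0)=0$ (so it is not in $\setX_s^{un}$ when $U(t_0)\le U^*$, and even when $U(t_0)>U^*$ it is not an equilibrium). Since $U$ decreases and the total drop $U(t_0)-U_\infty$ can be made arbitrarily small by taking $V(t_0)$ small (this follows from the same reduced dynamics: $\frac{dU}{dV}$-type estimates bound the excursion), the trajectory converges to a point arbitrarily close to $(U^\dagger,0,0)$, hence bounded away from $\setS$ — so $\setS$ is not attractive. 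This pins $\setX_s^{st}$ as the smallest attractive set.

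**Finally, for non-attractivity of $\setX_s^{un}$**: any initial condition on $\setX_s^{un}$ itself is an equilibrium, so it stays put and trivially ``converges'' to $\setX_s^{un}$; the point is that a neighborhood of $\setX_s^{un}$ contains points that do \emph{not} converge to $\setX_s^{un}$. Take $x(t_0)=(U(t_0),0,V(t_0))$ with $U(t_0)>U^*$ and $V(t_0)>0$ arbitrarily small: by the spreading analysis $V$ first rises to its peak $\hat V$ and then decays to $0$, while $U$ drops monotonically and — because once $U<U^*$ the virus load can still be non-negligible and keeps eroding $U$ — the limit $U_\infty$ ends up \emph{strictly below} $U^*$. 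Thus the trajectory leaves every small neighborhood of $\setX_s^{un}$ and converges into $\setX_s^{st}$, so $\setX_s^{un}$ is not attractive.

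**The main obstacle** I anticipate is making the contradiction step rigorous without leaning on the $c\gg\delta$ singular-perturbation heuristic: one needs a genuine argument on the full three-dimensional system \eqref{eq:SysOrig} showing that $U_\infty>U^*$ together with $V_\infty=0$ is impossible for a non-equilibrium trajectory. I would handle this either by a LaSalle-type invariance argument (the $\omega$-limit set is contained in $\setX_s$, and invariance forces it into $\setX_s^{st}$), or by integrating $\dot V/V$ and showing $\int_{t_0}^\infty(\beta U(s)-\text{(something)}\,)\,ds$ must diverge if $U$ stays above $U^*$, contradicting $V\to 0$ with $V$ eventually positive.
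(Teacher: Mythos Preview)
Your minimality and non-attractivity arguments are essentially correct in outline, but there is a real gap in the attractivity step --- and you correctly flag it as ``the main obstacle.'' The contradiction showing $U_\infty\le U^*$ rests on the $c\gg\delta$ reduction, and neither of your proposed rigorous replacements works as stated. A LaSalle argument requires a function with $\dot J\le 0$; the natural candidates give $\{\dot J=0\}\supseteq\{V=0\}$, whose largest invariant subset is all of $\setX_s$, including $\setX_s^{un}$, so LaSalle alone cannot separate the two pieces. Your second proposal, integrating $\dot V/V$, fails because for the full three-dimensional system $\dot V/V=pI/V-c$ does not involve $U-U^*$ at all; the coupling between $U$ and the growth of $V$ is mediated through $I$, and an inequality of the shape you sketch does not emerge from that integral.

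The paper takes a quite different route that sidesteps the difficulty entirely: it derives an \emph{exact} closed-form expression for $U_\infty$ in terms of the initial data via the principal branch of the Lambert $W$ function,
\[
U_\infty=-\frac{W\bigl(-\Rn\, U(t_0)\, e^{-\Rn(U(t_0)+I(t_0)+\frac{\delta}{p}V(t_0))}\bigr)}{\Rn},
\]
and then shows by a short optimization (Lemma~\ref{lem:Uinf_opt}) that the maximum of this expression over $\setX$ is exactly $U^*$, attained only at $(U^*,0,0)$. This yields $U_\infty\in[0,U^*]$ for every initial condition, with no approximation and no $c\gg\delta$ hypothesis. The same formula handles minimality cleanly: restricting to initial data $(U^*,I(t_0),0)$, the map $I(t_0)\mapsto U_\infty$ is a bijection from $(0,\infty)$ onto $(0,U^*)$, so every interior point of $\setX_s^{st}$ is the limit of some trajectory. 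The idea you are missing is that the Lyapunov-type function $J(x)=U-U^*-U^*\ln(U/U^*)+I+\tfrac{\delta}{p}V$ is in fact \emph{conserved} along solutions (set $\bar U=U^*$ in the computation of Theorem~\ref{theo:stab}); this first integral is precisely what produces the Lambert-$W$ relation and is the missing ingredient that would make your contradiction rigorous.
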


\begin{proof}
	The proof is divided into two parts. First it is proved that $\setX_s^{st}$ is an attractive set, and then, that it is the smallest one.
	
	\textbf{\textit{Attractivity of $\setX_s^{st}$:}}
	To prove the attractivity of $\setX_s^{st}$ in $\setX$ (and to show that $\setX_s^{un}$ is not attractive) we needs to prove that $U_{\infty} \in [0,U^*]$ for any initial conditions and values of $\Rn$. $U_\infty$ can be expressed as a function of $\Rn$ and initial conditions, as follows
    \begin{equation}\label{eq:Uinfty}
	    U_\infty (\Rn,U(t_0), I(t_0), V(t_0))=-\frac{W(-\Rn U(t_0)e^{-\Rn(U(t_0) + I(t_0) +\frac{\delta}{p}V(t_0))})}{\Rn}
    \end{equation}
    where $W(\cdot)$ is (the principal branch of) the Lambert function and $(U(t_0), I(t_0), V(t_0))$ are arbitrary initial conditions at a given time $t_0\geq0$. The minimum of $U_\infty$ is given by $U_\infty=0$, and it is reached when $U(t_0)=0$ (for any value of $\Rn$, $I(t_0)$ and $V(t_0)$). The maximum of $U_\infty$, on the other hand, is given by $U_\infty=U^*$, and it is reached only when $U(t_0)=U^*$ and $I(t_0)=V(t_0)=0$ (for any value of $\Rn$), as it is shown in Lemma~\ref{lem:Uinf_opt}, in Appendix~\hyperlink{sec:app2}{2}. Then, for any $(U(t_0),I(t_0),V(t_0)) \in \setX$ and $\Rn>0$, $U_\infty (\Rn,U(t_0), I(t_0), V(t_0)) \in [0,U^*]$, which means that $\setX_s^{st}$ is attractive, and the proof of attractivity is complete.
	
	Figure~\ref{fig:Uinfty_U0V0} shows how $U_\infty$ behaves as function of $U(t_0)$ and $V(t_0)$, when $I(t_0)=0$ and $I(t_0)=5e^5$. The first one is the scenario corresponding to $t=0$, when a certain amount of virus enters the healthy host.
    \begin{figure}
	\centering
	\begin{subfigure}{.5\textwidth}
		\centering
		\includegraphics[width=0.95\linewidth]{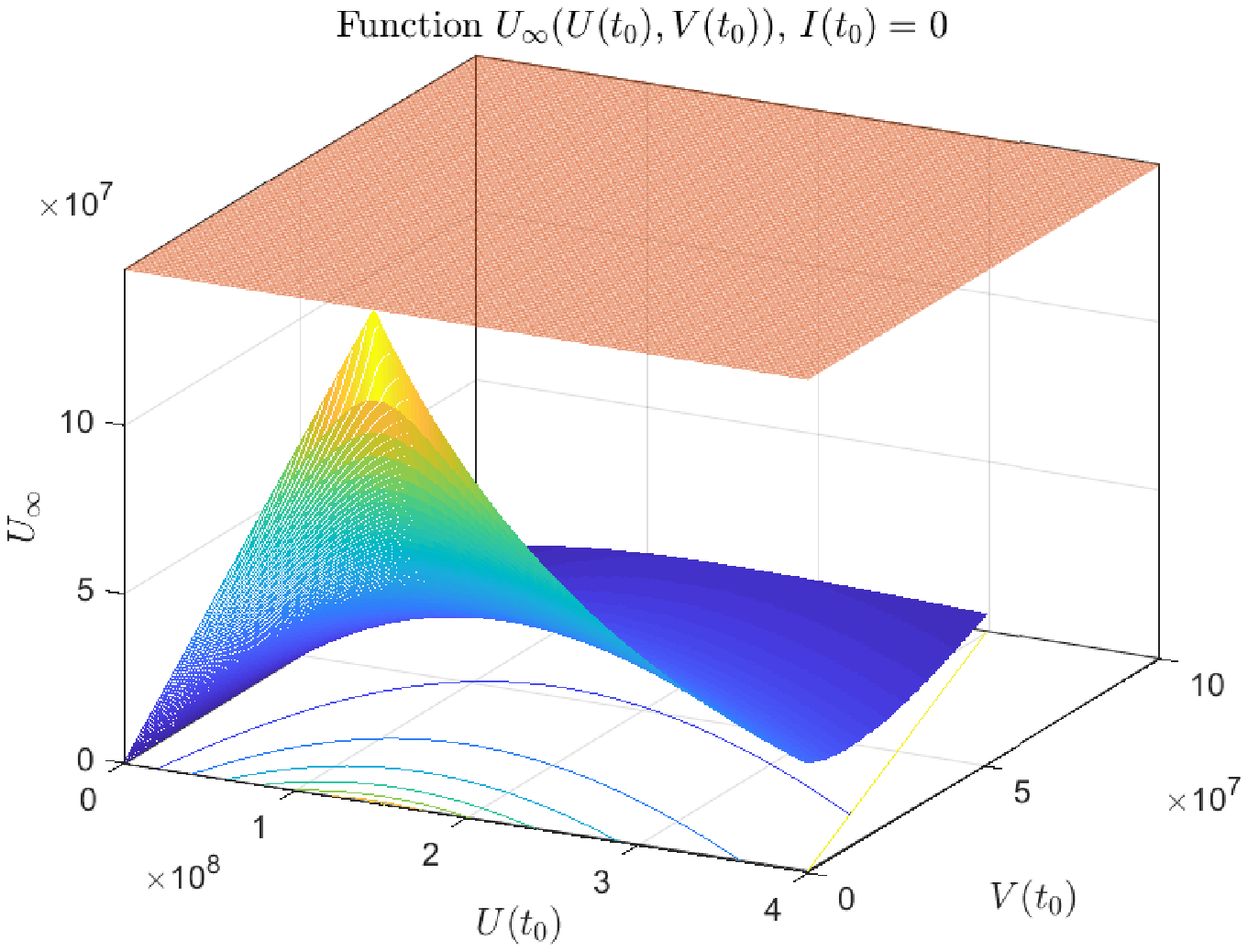}
		\caption{$U_\infty$ as function of $U(t_0)$ and $V(t_0)$, when $I(t_0)=0$. The orange plane represents $U^*=1/\Rn=1.5e^8$. The maximum of $U_\infty$ is reached when $U(t_0) = U^*$ and $V(t_0) =0$, and is given by $U^*$. Patient 'A'.}
		\label{fig:Uinfty_U0V00}
	\end{subfigure}%
	\hspace*{0.2truecm}
	\begin{subfigure}{.5\textwidth}
		\centering
		\includegraphics[width=0.95\linewidth]{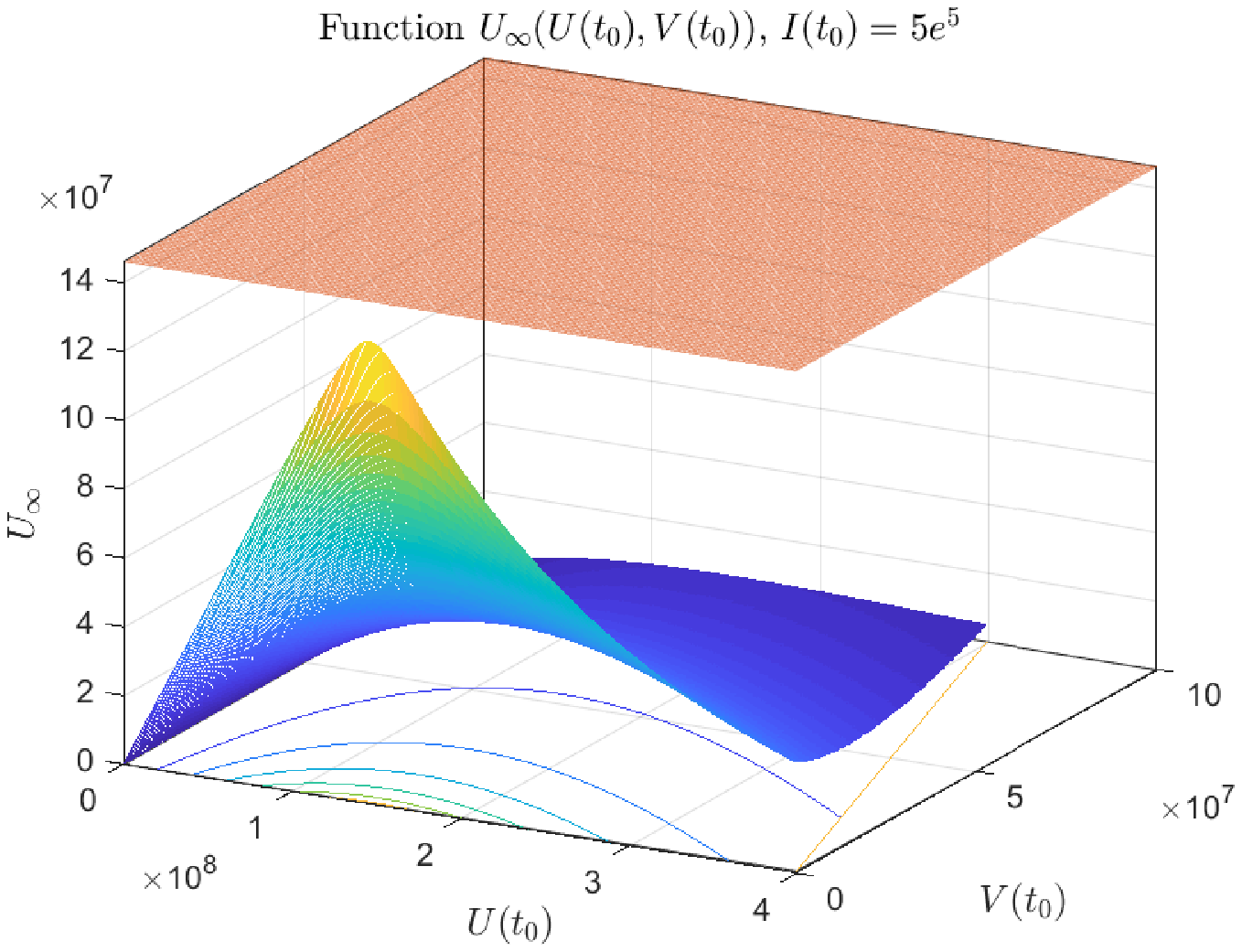}
		\caption{$U_\infty$ as function of $U(t_0)$ and $V(t_0)$, when $I(t_0)=5e^5$. The orange plane represents $U^*=1/\Rn=1.5e^8$. The maximum of $U_\infty$ is reached when $U(t_0) = U^*$ and $V(t_0) = 0$, and is smaller than $U^*$. Patient 'A'.}
		\label{fig:Uinfty_U0V05}
	\end{subfigure}
	\caption{Function $U_\infty(U(t_0),V(t_0))$, for different values of $I(t_0)$.}
	\label{fig:Uinfty_U0V0}
    \end{figure}
	
	\textbf{\textit{$\setX_s^{st}$ is the smallest attractive set:}}
	It is clear from the previous analysis, that any initial state $x(t_0) =(U(t_0),I(t_0),V(t_0)) \in \setX$ converges to a state $x_{\infty}=(U_{\infty},0,0)$ with $U_{\infty} \in [0,U^*]$. This means that $\setX_s^{un}$ is not attractive for any point in $\setX\backslash\setX_s$.
	However, to show that $\setX_s^{st}$ is the smallest attractive set, we need to prove that every point $x_s\in\setX_s^{st}$ is necessary for the attractiveness.
	
	Let us consider a initial state of the form $(U^*,I(t_0),0)$ with $I(t_0)\ge0$. Since $W$ is a bijective function from $(-1/e,0)$ to $(-1,0)$, then 	$U_\infty (\Rn,U^*, \cdot , 0)$ is bijective from $(0,+\infty)$ to $(0, U^*)$. Hence for every point $x_s\in\inti(\setX_s^{st})$ there exists $I(t_0)\ge0$ such that the initial state $(U^*,I(t_0),0)$ converges to $x_s$.
	Since every interior point of  $\setX_s^{st}$ is necessary for the attractiveness, then the smallest closed attractive set is $\setX_s^{st}$, and the proof is concluded.
\end{proof}

\subsection{Local $\epsilon-\delta$ stability of $\setX_s^{st}$} \label{sec:epdelstabil}

The next theorem shows the formal Lyapunov (or $\epsilon-\delta$) stability of the equilibrium set $\setX_s^{st}$.

\begin{theorem}[Local $\epsilon-\delta$ stability of $\setX_s^{st}$]\label{theo:stab}
	Consider system \eqref{eq:SysOrig} constrained by $\setX$.
	Then, the equilibrium set $\setX_s^{st}$ defined in \eqref{ec:setXs1} is the largest locally $\epsilon-\delta$ stable.
\end{theorem}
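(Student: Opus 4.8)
The plan is to split the statement into two claims: (a) $\setX_s^{st}$ is $\epsilon-\delta$ stable, and (b) no set $\mathcal B$ with $\setX_s^{st}\subsetneq\mathcal B\subseteq\setX_s$ is $\epsilon-\delta$ stable, so that $\setX_s^{st}$ is maximal inside the equilibrium set $\setX_s=\setX_s^{st}\cup\setX_s^{un}$. The main tool for (a) will be the first integral
\[ L(U,I,V):=U-U^*\ln U+I+\tfrac{\delta}{p}V, \]
which a direct computation along \eqref{eq:SysOrig} shows to be conserved: using $U^*=1/\Rn=c\delta/(\beta p)$ one finds $\dot L=\beta U^*V-\tfrac{\delta c}{p}V=0$. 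Setting $g(U):=U-U^*\ln U$, I would then record that $g$ is strictly decreasing on $(0,U^*]$, strictly increasing on $[U^*,\infty)$ and convex, with global minimum $g_{\min}:=g(U^*)$, and that $g(U)-g_{\min}\le (U-U^*)^2/(2U^*)$ for every $U\ge U^*$, since $g'(U^*)=0$ and $g''(U)=U^*/U^2\le 1/U^*$ on $[U^*,\infty)$.

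For (a), fix a solution from $x(t_0)\in\setX$ and set $\rho:=\|x(t_0)\|_{\setX_s^{st}}$. From the geometry of $\setX_s^{st}$ one has $\|x\|_{\setX_s^{st}}=\big((U-U^*)_+^2+I^2+V^2\big)^{1/2}$, hence $I(t_0)\le\rho$, $V(t_0)\le\rho$ and $(U(t_0)-U^*)_+\le\rho$. Since $\dot U=-\beta UV\le0$, $U(t)\le U(t_0)$ for all $t$, so $g(U(t_0))-g(U(t))\le0$ when $U(t_0)\le U^*$ (monotonicity of $g$ on $(0,U^*]$) and $g(U(t_0))-g(U(t))\le g(U(t_0))-g_{\min}\le\rho^2/(2U^*)$ when $U(t_0)>U^*$ (using $g(U(t))\ge g_{\min}$). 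Conservation of $L$ then gives, for all $t\ge t_0$,
\[ I(t)+\tfrac{\delta}{p}V(t)=I(t_0)+\tfrac{\delta}{p}V(t_0)+g(U(t_0))-g(U(t))\le\Big(1+\tfrac{\delta}{p}\Big)\rho+\frac{\rho^2}{2U^*}=:h(\rho), \]
so $I(t)\le h(\rho)$. Bootstrapping through $\dot V=pI-cV\le p\,h(\rho)-cV$ and a comparison argument yields $V(t)\le V(t_0)+\tfrac{p}{c}h(\rho)\le\rho+\tfrac{p}{c}h(\rho)$, and together with $(U(t)-U^*)_+\le(U(t_0)-U^*)_+\le\rho$ this bounds $\|x(t)\|_{\setX_s^{st}}$ by a continuous $\Psi(\rho)$ with $\Psi(0)=0$. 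Given $\epsilon>0$, choosing $\delta>0$ with $\sup_{r\le\delta}\Psi(r)<\epsilon$ closes this part (the case $U(t_0)=0$ being trivial, since then $U\equiv0$ and $I,V\to0$ monotonically).

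For (b), suppose $\mathcal B$ is $\epsilon-\delta$ stable with $\setX_s^{st}\subsetneq\mathcal B\subseteq\setX_s$; then $\mathcal B$ contains some $x_1=(U_1,0,0)$ with $U_1>U^*$. For arbitrarily small $\eta>0$ the solution from $(U_1,\eta,0)$ has $\dot V(t_0)=p\eta>0$, so (cf. Section~\ref{sec:infectmod}) the virus spreads and attains $\sup_{t}V(t)=\hat V(\eta)>0$; since $\setX_s=\{I=0,\,V=0\}\supseteq\mathcal B$, the trajectory reaches distance at least $\hat V(\eta)$ from $\mathcal B$. The point I would argue is that $\hat V(\eta)\ge\hat V_{\min}(U_1)>0$ uniformly in small $\eta$: either from the reduced dynamics of Section~\ref{sec:infectmod}, which give $dV/dU=-(p/c)(1-U^*/U)$, so that integrating from $U_1$ down to $U^*$ yields $\hat V\gtrsim (pU^*/c)\big(U_1/U^*-1-\ln(U_1/U^*)\big)>0$ independent of $\eta$; or from conservation of $L$, which forces $g(U_\infty)=g(U_1)+\eta$ and hence $U_1-U_\infty\to U_1-U_\infty^{0}>0$ as $\eta\to0^+$, i.e. a definite amount of target cells must be consumed. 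Taking $\epsilon:=\hat V_{\min}(U_1)/2$ then shows $\mathcal B$ is not $\epsilon-\delta$ stable, completing the argument.

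I expect the main obstacle to be precisely this uniform lower bound on the viral peak in (b) — morally the statement "a supra-threshold perturbation triggers a nondegenerate outbreak" — whose fully rigorous proof without the $c\gg\delta$ reduction requires showing that the outbreak portion of the solution from $(U_1,\eta,0)$ is, up to a time shift of order $\ln(1/\eta)$, essentially independent of $\eta$ (equivalently, that the escape directed by the positive eigenvalue $\lambda_3$ of the linearization at $x_1$ has genuinely nonzero $I,V$ components). A lesser subtlety in (a) is that $L$ controls only the combination $I+\tfrac{\delta}{p}V$, which pins down $I$ sharply but $V$ only up to the factor $p/\delta$; this is why the pointwise bound on $V$ is recovered by bootstrapping $\dot V=pI-cV$ rather than read off the first integral directly.
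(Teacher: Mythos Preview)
Your proof is correct and takes a genuinely different route from the paper's. For part~(a), the paper parametrizes by $\bar U\in(0,U^*]$ the family $J_{\bar U}(x)=U-\bar U-\bar U\ln(U/\bar U)+I+\tfrac{\delta}{p}V$, checks $\dot J_{\bar U}=V(\beta\bar U-\delta c/p)\le 0$, concludes that each individual equilibrium point of $\setX_s^{st}$ is $\epsilon$--$\delta$ stable, and then passes to stability of the set via a separate compactness lemma (Lemma~\ref{lem:stab}). You instead fix $\bar U=U^*$, for which $L:=J_{U^*}$ is an exact first integral, and estimate the set-distance $\|x\|_{\setX_s^{st}}=\bigl((U-U^*)_+^2+I^2+V^2\bigr)^{1/2}$ directly; the price is that $L$ controls only the combination $I+\tfrac{\delta}{p}V$, which you compensate for with the comparison bootstrap on $\dot V=pI-cV$. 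Your route is more self-contained (no auxiliary set-stability lemma needed); the paper's is cleaner Lyapunov bookkeeping but relies on that extra lemma.

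For part~(b), the paper simply invokes the attractivity result (Theorem~\ref{theo:attract}) to conclude that no point of $\setX_s^{un}$ can be $\epsilon$--$\delta$ stable. Your direct ``uniform outbreak'' argument is in fact the sharper statement needed to rule out \emph{set} stability of any $\mathcal B$ with $\setX_s^{st}\subsetneq\mathcal B\subseteq\setX_s$. The obstacle you flag is smaller than you fear and is closed by the very same conserved $L$, without the $c\gg\delta$ reduction: once $I_\infty=V_\infty=0$ (Section~\ref{sec:infectmod}), conservation gives $g(U_\infty)=g(U_1)+\eta>g(U_1)$, which together with $U_\infty\le U_1$ and the monotonicity of $g$ on $[U^*,\infty)$ forces $U_\infty<U^*$; hence the trajectory crosses $U=U^*$ at some time $t^*$, and there
\[
I(t^*)+\tfrac{\delta}{p}V(t^*)=\eta+g(U_1)-g(U^*)\ \ge\ g(U_1)-g_{\min}>0,
\]
which by Cauchy--Schwarz yields $\sqrt{I(t^*)^2+V(t^*)^2}\ge(g(U_1)-g_{\min})/\sqrt{1+(\delta/p)^2}$ uniformly in $\eta$. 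Since $\mathcal B\subseteq\setX_s=\{I=0,V=0\}$, this is precisely the uniform lower bound on $d(x(t^*),\mathcal B)$ that completes your argument.
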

\begin{proof}
	We proceed by analysing the stability of single equilibrium points $\bar x := (\bar U,0,0)$, with $\bar U \in (0,U_0]$ (i.e., $\bar x \in \setX_s\backslash\{(0,0,0)\}$).
	For each $\bar x$ let us consider the following Lyapunov function candidate
	\begin{eqnarray} \label{ec:lya1}
	J(x) := U-\bar U - \bar U \ln\left(\frac{U}{\bar U}\right) + I + \frac{\delta}{p} V.
	\end{eqnarray}
	This function is continuous in $\setX$, is positive for all nonegative $x \neq \bar x$ and $J(\bar x)=0$.
	Furthermore, for $x(t) \in \setX$ and $t\geq 0$ we have
	\begin{align*} \label{ec:lya2}
	\dot J(x(t))  &= \frac{\partial J}{\partial x} \dot{x}(t) =  \left[\frac{d J}{d U}~~\frac{d J}{d I}~~ \frac{d J}{d V} \right] \left[
	\begin{array}{c}
	-\beta U(t) V(t)  \\
	\beta U(t)V(t)-\delta I(t)  \\
	pI(t) - cV(t)   
	\end{array}\right]\nonumber\\
	&= \left[(1-\frac{\bar U}{U(t)})~~1~~ \frac{\delta}{p} \right] \left[
	\begin{array}{c}
	-\beta U(t) V(t)  \\
	\beta U(t)V(t)-\delta I(t) \\
	pI(t) - cV(t)   
	\end{array}\right]\nonumber\\
	&= (-\beta U(t) V(t) + \bar U \beta V(t)) + (\beta U(t)V(t)-\delta I(t)) + \left(\delta I(t) - \frac{\delta c}{p} V(t)\right)\nonumber\\
	&= \bar U \beta V(t) - \frac{\delta c}{p} V(t) = V(t) \left(\bar U \beta  - \frac{\delta c}{p}\right),
	\end{align*}
where $\dot{x}(t)$ represents system \eqref{eq:SysOrig}. Function $\dot{J}(x(t))$ depends on $x(t)$ only through $V(t)$. So, independently of the value of the parameter $\bar U$, $\dot{J}(x(t))=0$ for $V(t)\equiv 0$. This means that for any single $x(0) \in \setX_s$, $V(0)=I(0)=0$ and so, $V(t)= 0$, for all $t\geq 0$. So $\dot{J}(x(t))$ is null for any $x(0) \in \setX_s$ (i.e, it is not only null for $x(0)=\bar x$ but for any $x(0) \in \setX_s$).
	
On the other hand, for $x(0) \notin \setX_s$, function $\dot{J}(x(t))$ is negative, zero or positive, depending on if the parameter $\bar U$ is smaller, equal or greater than  $U^*= \frac{\delta c}{\beta p}$, respectively, and this holds for all $x(0)\in \setX$ and $t\geq 0$. So, for any $\bar x \in \setX_s^{st}$, $\dot J(x(t))\leq 0$ (particularly, for $\bar x=(\bar U,0,0)=(U^*,0,0)$, $\dot J(x(t)) = 0$, for all $x(0) \in \setX$ and $t\geq 0$) which means that each $\bar x \in \setX_s^{st}$ is locally $\epsilon-\delta$ stable (see Theorem~\ref{theo:lyap} in Appendix~\hyperlink{sec:app1}{1}).

Finally, when $\bar U = 0$, i.e. $\bar x=(0,0,0)$, we define the Lyapunov functional as $J(x) = U-I+\delta/pV$ and we proceed analogously as before to prove the local $\epsilon-\delta$ stability of the origin.

Therefore, since every state in $\setX_s^{st}$ is locally $\epsilon-\delta$ stable and $\setX_s^{st}$ is compact, by Lemma~\ref{lem:stab}, the whole set $\setX_s^{st}$ is locally $\epsilon-\delta$ stable.
	
Finally, since $\setX_s^{st}$ is attractive in $\setX\backslash\setX_s^{un}$ then it is impossible for any $x\in\setX_s^{un}$ to be $\epsilon-\delta$ stable, which implies that $\setX_s^{st}$ is also the largest locally $\epsilon-\delta$ stable set in $\setX_s$, which completes the proof.
\end{proof}

\begin{remark} \label{rem:nonas}
In the latter proof, if we pick a particular $\bar x \in \setX_s^{st}$, then $\dot{J}(x(t))$ is not only null for $x(0)=\bar x$ but for all $x(0) \in \setX_s^{st}$, since in this case, $V(t)=0$, for $t\geq0$. This means that it is not true that $\dot{J}(x(t)) < 0$ for every $x \neq \bar x$, and this is the reason why we cannot use the last part of Theorem~\ref{theo:lyap} to ensure the asymptotic stability of particular equilibrium points (or subsets of $\setX_s^{st}$). In fact, they are $\epsilon-\delta$ stable, but not attractive.
\end{remark}

\subsection{Asymptotic stability of $\setX_s^{st}$} \label{sec:ASproof}

In the next Theorem, based on the previous results concerning the attractivity and $\epsilon-\delta$ stability of $\setX_s^{st}$, the asymptotic
stability is formally stated.

\begin{theorem}\label{theo:AS}
	Consider system \eqref{eq:SysOrig} constrained by the positive set $\setX$.
	Then, the set $\setX_s^{st}$ defined in \eqref{ec:setXs1} is the unique asymptotically stable (AS) equilibrium set, with a domain of attraction (DOA) given by $\setX\backslash\setX_s^{un}$. Furthermore, $\setX_s^{un}$ is unstable.
\end{theorem}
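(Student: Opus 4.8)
The plan is to obtain Theorem~\ref{theo:AS} as a direct consequence of the two preceding results. By Definition~\ref{def:AS}, asymptotic stability of an equilibrium set is exactly the conjunction of local $\epsilon-\delta$ stability and attractivity, the domain of attraction being the set of initial states whose trajectories converge to the set. Theorem~\ref{theo:stab} supplies the $\epsilon-\delta$ stability of $\setX_s^{st}$, and Theorem~\ref{theo:attract} supplies the attractivity, showing moreover that \emph{every} trajectory issued from $\setX\backslash\setX_s^{un}$ converges to $\setX_s^{st}$; conversely, every point of $\setX_s^{un}$ is an equilibrium, hence stays at a $U$-value $>U^*$ and therefore at distance $>0$ from $\setX_s^{st}$, so it never converges to $\setX_s^{st}$. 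Consequently the domain of attraction is precisely $\setX\backslash\setX_s^{un}$. This already yields the asymptotic stability of $\setX_s^{st}$ together with the claimed domain of attraction.

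For the instability of $\setX_s^{un}$ I would combine two facts: by Theorem~\ref{theo:attract} it is not attractive, and it is not locally $\epsilon-\delta$ stable. For the latter, fix any $x_s=(U_s,0,0)\in\setX_s^{un}$, so $U_s>U^*$. For arbitrarily small $V_0>0$ the state $(U_s,0,V_0)$ lies in $\setX\backslash\setX_s^{un}$ and is within $V_0$ of $\setX_s^{un}$; but since $U(t_0)=U_s>U^*$, the virus spreads, and (as in the quasi–steady-state analysis recalled in Section~\ref{sec:infectmod} and used in the proof of Theorem~\ref{theo:attract}) its peak $\hat V$ is bounded below by a strictly positive constant depending only on $U_s$, not on $V_0$. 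Since every point of $\setX_s^{un}$ has zero virus coordinate, the state at the peak time lies at distance at least $\hat V$ from $\setX_s^{un}$. Choosing $\epsilon$ below that lower bound shows that no $\delta$ can work, so $\setX_s^{un}$ is not $\epsilon-\delta$ stable and is therefore unstable.

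Uniqueness of the AS equilibrium set then follows by squeezing. Let $\mathcal{E}\subseteq\setX_s$ be any asymptotically stable equilibrium set. Being locally $\epsilon-\delta$ stable, $\mathcal{E}$ cannot meet $\setX_s^{un}$: if it contained some $x_s=(U_s,0,0)$ with $U_s>U^*$, then starting from $(U_s,0,V_0)$ with $V_0$ small the trajectory would reach, at the virus peak, distance at least $\hat V$ from $\setX_s$ and hence from $\mathcal{E}\subseteq\setX_s$, contradicting $\epsilon-\delta$ stability exactly as in Theorem~\ref{theo:stab}. Thus $\mathcal{E}\subseteq\setX_s^{st}$. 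Being attractive, $\mathcal{E}$ must also contain $\setX_s^{st}$: the proof of Theorem~\ref{theo:attract} exhibits, for every interior point of $\setX_s^{st}$, an initial condition on the line $U=U^*$ whose trajectory converges to it, so a closed attractive set must contain $\inti(\setX_s^{st})$ and hence its closure $\setX_s^{st}$ — equivalently, $\setX_s^{st}$ is the smallest attractive set. Therefore $\mathcal{E}=\setX_s^{st}$.

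I expect the uniqueness step to be the main obstacle: one must make the ``largest $\epsilon-\delta$ stable'' (Theorem~\ref{theo:stab}) and ``smallest attractive'' (Theorem~\ref{theo:attract}) characterizations interact cleanly with the domain of attraction attached to an arbitrary candidate set $\mathcal{E}$ — in particular, checking that the trajectories used to force $\setX_s^{st}\subseteq\mathcal{E}$ indeed start inside the domain of attraction of $\mathcal{E}$, and that the relative topology inherited from $\R^3$ on $\setX$ does not interfere with the neighbourhood conditions of Definition~\ref{def:AS}. Beyond this, the argument is a routine assembly of Theorems~\ref{theo:attract} and~\ref{theo:stab}.
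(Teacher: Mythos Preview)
Your proposal is correct and follows essentially the same route as the paper: the paper's proof is a two-line appeal to Theorems~\ref{theo:attract} and~\ref{theo:stab}, noting that $\setX_s^{st}$ is simultaneously the smallest attractive set and the largest locally $\epsilon$--$\delta$ stable set in $\setX_s$. You assemble the same two ingredients, and in fact supply more detail than the paper does on the DOA identification, the instability of $\setX_s^{un}$, and the uniqueness claim, all of which the paper leaves implicit in that one sentence.
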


\begin{proof}
	The proof follows from Theorems~\ref{theo:attract}, which states that $\setX_s^{st}$ is the smallest attractive in $\setX$, and~\ref{theo:stab},
	which states that $\setX_s^{st}$ is the largest locally $\epsilon-\delta$ stable set in~$\setX$. 
\end{proof}

Figures~\ref{fig:FhasePortNoSpread} shows phase portrait plots of system \eqref{eq:SysOrig}, corresponding to different initial conditions.
\begin{figure}
	\centering
	\begin{subfigure}{.5\textwidth}
		\centering
		\includegraphics[width=1\linewidth]{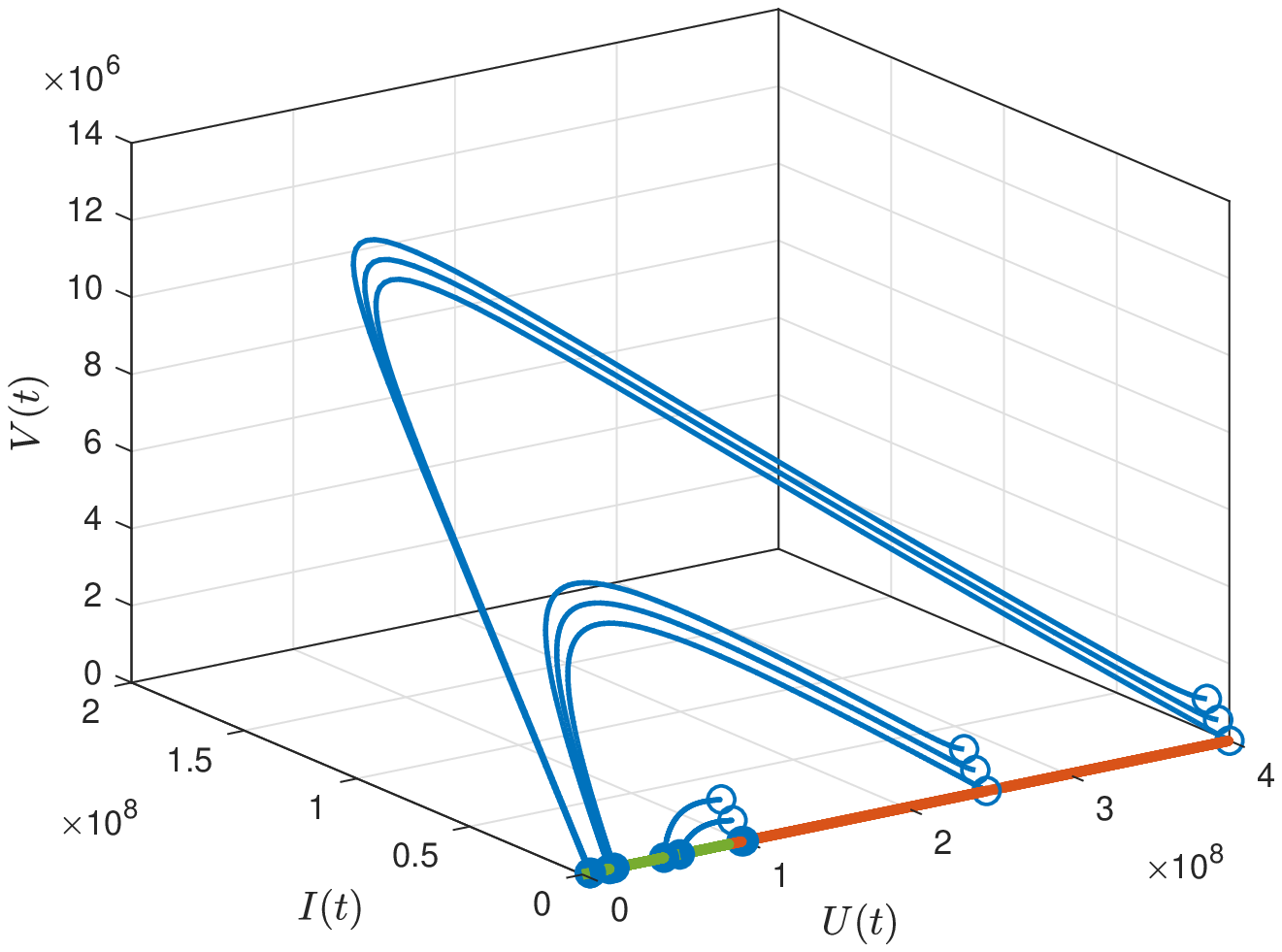}
		\caption{}
		\label{fig:FhasePortSpread}
	\end{subfigure}%
	\hspace*{0.2truecm}
	\begin{subfigure}{.5\textwidth}
		\centering
		\includegraphics[width=1\linewidth]{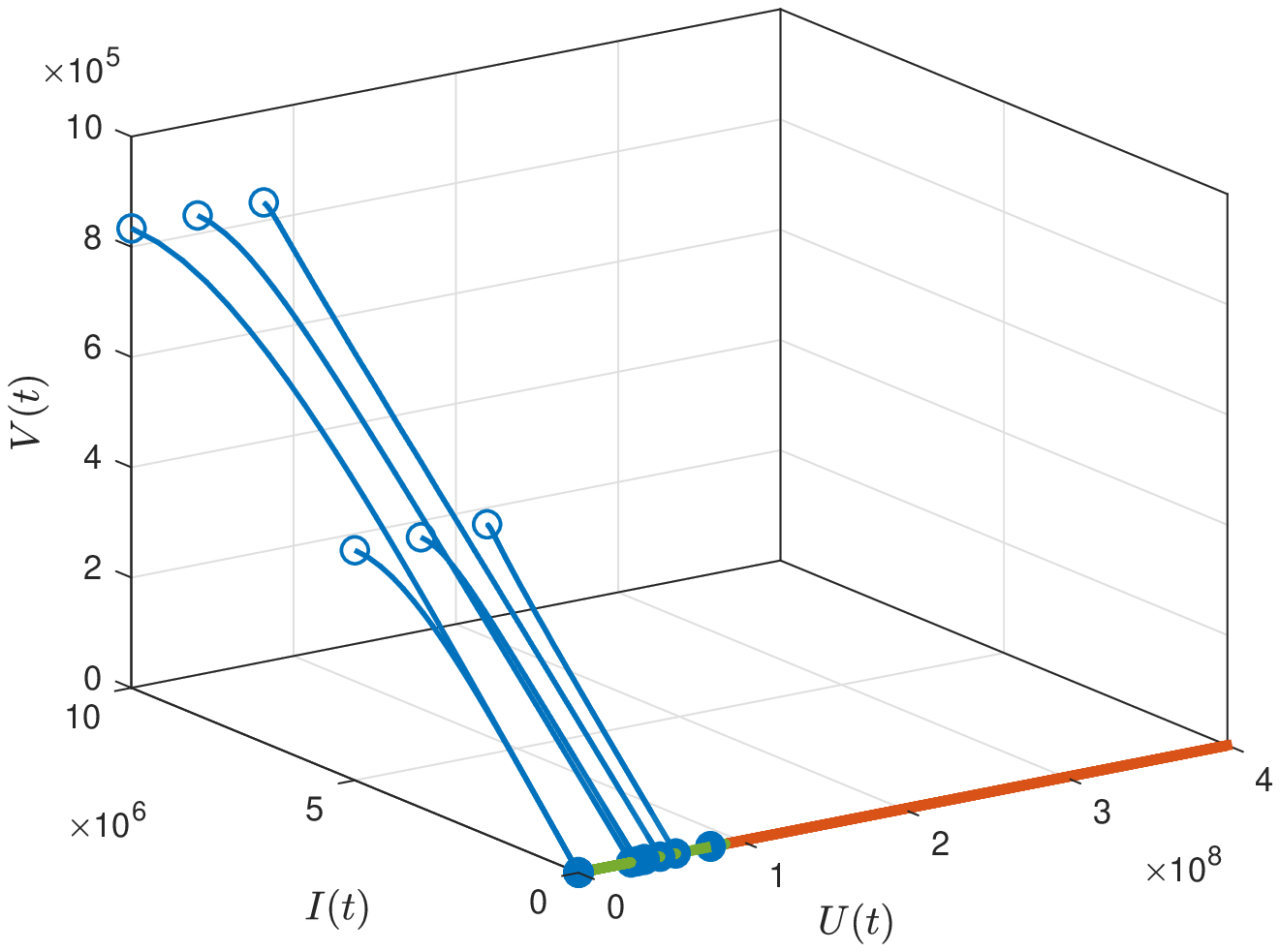}
		\caption{}
		\label{fig:sub2}
	\end{subfigure}
	\caption{Phase portrait for virtual patient 'A', described in Section~\ref{sec:IntPatA}. (a) Case $U(t_0)>U^*$. States arbitrarily close to $\setX_s^{un}$ (in red), converges to $\setX_s^{st}$ (in green), so the virus spreads in the host. (b) Case $U(t_0)<U^*$. States arbitrarily close to $\setX_s^{st}$, converges to $\setX_s^{st}$, so the virus does not spread in the host. Empty circles represent the initial state, while solid circles represent final states.}
	\label{fig:FhasePortNoSpread}
\end{figure}
\subsection{$U_\infty$ as function of initial conditions}\label{sec:InitCond}

In this section some characteristics of system \eqref{eq:SysOrig} concerning the value of $U_\infty$ as a function of the reproduction number $\Rn$ and the initial conditions are analyzed. Consider the next Property.
\begin{property}\label{propt:sinfty}
    Consider system \eqref{eq:SysOrig} with arbitrary initial conditions $(U(t_0),I(t_0),V(t_0)) \in \setX$, for some $t_0 \geq 0$. Then:
\begin{enumerate}
\item For any value of $U(t_0)>0$, $I(t_0)>0$, $V(t_0)>0$, $U_\infty(\Rn,U(t_0),I(t_0),V(t_0)) \rightarrow 0$, when $\Rn \rightarrow \infty$; while $U_\infty(\Rn,U(t_0),I(t_0),V(t_0))$ remains close to $U(t_0)$ when $\Rn \rightarrow 0$.
\item For $U(t_0) > U^*$ and fixed $I(t_0)>0$, $V(t_0)>0$ and $\Rn>0$, $U_\infty(\Rn,U(t_0),I(t_0),V(t_0))$ decreases when $U(t_0)$ increase, and $U_\infty(\Rn,U(t_0),I(t_0),V(t_0)) < U^*$. This means that the closer $U(t_0)$ is to $U^*$ from above, the closer will be $U_\infty$ to $U^*$ from below.
\item For $U(t_0) < U^*$ and fixed $I(t_0)>0$, $V(t_0)>0$ and $\Rn>0$, $U_\infty(\Rn,U(t_0),I(t_0),V(t_0))$ increases with $U(t_0)$, and $U_\infty(\Rn,U(t_0),I(t_0),V(t_0)) < U^*$. This means that smaller values of $U(t_0)$ produce smaller values of $U_\infty$, both below $U^*$.
\item For any fixed $U(t_0)$ and $\Rn>0$, $U_\infty(\Rn,U(t_0),I(t_0),V(t_0))$ decrease with $I(t_0)$ and $V(t_0)$, and $U_\infty(\Rn,U(t_0),I(t_0),V(t_0)) \leq U^*$.
\item For fixed $\Rn>0$, $U(t_0) = U^*$ and $I(t_0)=V(t_0)=0$, $U_\infty(\Rn,U(t_0),I(t_0),V(t_0))$ reaches its maximum over $\setX$, and the maximum value is given by $U^*$ (see Lemma~\ref{lem:Uinf_opt} in Appendix~\hyperlink{sec:app2}{2}). 
\end{enumerate}
\end{property} 

The proof of the properties are omitted for brevity. However, Figures~\ref{fig:Uinfty_U0V0} and~\ref{fig:Uinf_U} show how $U_\infty$ behaves for different values of initial conditions.

\subsection{Simulation example}\label{sec:IntPatA}

All along this work we use a virtual patient, denoted as patient 'A', to demonstrate the results of each section. The parameters of patient 'A' were estimated by
using viral load data of a RT-PCR COVID-19 positive patient ---reported in \cite{wolfel2020virological} and used in \cite{abuin2020char,vargas2020host}--- and are given by 
\begin{table}[H]
	\begin{center}		
		\caption{Target cell-limited model parameters for COVID-19, patient A \cite{vargas2020host}}
		\begin{tabular}{| c | c | c |c |}
			\hline
			  $\beta$ &  $\delta$ &  $p$  & $c$ \\\hline
			  $1.35\times10^{-7}$  &  0.61 &  0.2  &  2.4 \\ \hline
		\end{tabular}
		\label{tab:param}
	\end{center}
\end{table}
The initial conditions are given by: $U_0=4 \times 10^8$, $I_0=0$ and $V_0=0.31$. Furthermore, the reproduction number is $\Rn = 1.84 \times 10^{-8}$, while the critical value for the susceptible cells is $U^* = 5.44 \times 10^7$. The final value of $U$ (if no antiviral treatment is applied) is given by $U_{\infty} =2.57 \times 10^5 $, which means that the area under the curve (AUC) of $V$ is given by $AUC_V = 5.45 \times 10^7$. The peak of $V$ is given by $\hat V=1.98 \times 10^7$.
Figure~\ref{fig:OL} shows the time response corresponding to patient 'A'. As predicted, $U_{\infty}$ is (significantly) smaller than $U^*$, which means that antivirals reducing (even for a finite period of time) either $p$ or $\beta$ will increase $U_{\infty}$ and, so, will reduce the AUC and, probably, the peak of $V$.

\begin{remark}
    Note that the area under the curve of $V$, between times $t_{d1}$ and $t_{d2}$ is given by $AUC_V:=\int_{t_{d1}}^{t_{d2}} V(t) d t=\frac{1}{c}[\frac{p}{\delta}(U(t_{d1})-U(t_{d2}) + I(t_{d1})-I(t_{d2})) + V(t_{d1}) - V(t_{d2})]$. Therefore, assuming  $U(t_{d1})=U(t_0)$, $I(t_{d1})=I(t_0)$, $V(t_{d1})=V(t_0)$, with $U(t_0)\gg I(t_0)$, $U(t_0)\gg V(t_0)$, and $U(t_{d2})=U_\infty$, $I(t_{d2})=0$ and $V(t_{d2})=0$, which gives: $AUC_V \approx \frac{1}{c}[\frac{p}{\delta}(U(t_0)-U_\infty)]$. This way, if $U_\infty$ is increased with respect to the value corresponding to the untreated case, the AUC of viral load decreases. Moreover, as it was shown in \cite{abuin2020dynamical}, the viral load at time to peak is monotonically decreasing with antiviral therapy reducing $\beta$ or $p$.
\end{remark}

\begin{figure}
	\centering
	\includegraphics[width=0.75\textwidth]{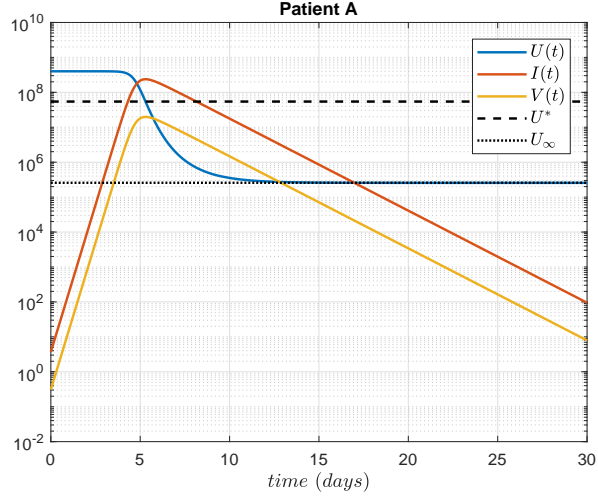}
	\caption{\small{Time evolution of virtual patient 'A'. As $c>>\delta$ (as it is always the case for real patient data), $I(t) \approx \frac{c}{p}V(t)$ for  all $t\geq 0$.}}
	\label{fig:OL}
\end{figure}
%

\section{Inclusion of PK and PD of antiviral treatment} \label{sec:pkpd}

The idea now is to formally incorporate the pharmacodynamic (PD) and pharmacokinetics (PK) of antivirals into system~\ref{eq:SysOrig}, to obtain a controlled system, i.e. a system with certain control actions - given by the antivirals - that allows us to (even partially) modify the whole system dynamic according to some control objectives. In contrast to vaccines that kill the virus, antiviral just inhibits the virus infection and replication rates, so reducing the advance of the infections in the respiratory tract. The PD is introduced in system \eqref{eq:SysOrig} as follows:
%
\begin{subequations}\label{eq:SysOrigPD}
	\begin{align}
		&\dot{U}(t)   =  - \beta(1 - \eta(t)) U(t) V(t), \\
		&\dot{I}(t) =  \beta(1 - \eta(t)) U(t) V(t) - \delta I(t) \\
		&\dot{V}(t) =  p I(t) - c V(t),
	\end{align}
\end{subequations}
where $\eta(t) \in [0,1)$ represents the inhibition antiviral effects affecting the infection rate $\beta$ (note that, according to \cite{abuin2020dynamical}, the effect of antivirals on the replication rate $p$, is analogous to the one on $\beta$, since both parameters affect in the same way the reproduction number $\Rn$). 

On the other hand, the PK is modeled as a one compartment with an impulsive input action (to properly account for pills intakes or injections):
%
\begin{subequations}\label{eq:PK}
\begin{align}
&\dot{D}(t) = - \delta_D D(t),~~ t \neq t_k,  \\
&D(t_k) = D(t_k^-) + u_{k-1},~~ k \in \I, 
\end{align}
\end{subequations}
where $D$ is the amount of drug available (with $D(0)=D_0=0$), $\delta_D$ is the drug elimination rate and the antiviral dose $u_k$ enters the system impulsively at times $t_k:=kT$, with $T>0$ being a fix time interval and $k \in \I$. Time $t_k^-$ denotes the time just before $t_k$, i.e., $D(t_k^-) = \lim_{\delta \rightarrow 0^+} D(t_k - \delta)$. Note that \eqref{eq:PK} is a continuous-time system impulsively controlled, which shows discontinuities of the first kind (jumps) at times $t_k$ and free responses in $t \in [t_k,t_{k+1})$ (see \cite{rivadeneira2017control} for details). 
 
Finally, the way the drug $D$ enters system \eqref{eq:SysOrigPD} is by means of $\eta$ as follows:
\begin{eqnarray} \label{eq:PD}
\eta(t) &=& \frac{D(t)}{D(t)+EC_{50}}
\end{eqnarray}
where $EC_{50}$ represents the drug concentration in the blood where the drug is half-maximal. $\eta(t)$ is assumed to be in $[0,\eta_{\max})$, with $\eta_{\max}<1$ (not full antiviral effect is considered, since this is an unrealistic scenario). 

\subsection{Impulsive scheme}\label{sec:impsys}

Based on the PK and PD previous analysis, the complete Covid-19 infection model, taking into account an antiviral treatment (the controlled system) reads as follows:
\begin{subequations}\label{eq:SysOrigPKPD}
	\begin{align}
	&\dot{U}(t)   =  - \beta(1 - \eta(t)) U(t) V(t),~~ t \neq t_k, \\
	&\dot{I}(t) =  \beta(1 - \eta(t)) U(t) V(t) - \delta I(t),~~ t \neq t_k, \\
	&\dot{V}(t)   = p I(t) - c V(t),~~ t \neq t_k, \\
	&\dot{D}(t) = - \delta_D D(t),~~ t \neq t_k,\\
	&D(t_k) = D(t_k^-) + u_{k-1},~~ k \in \I
	\end{align}
\end{subequations}
with initial conditions given by $x_0=(U_0,I_0,V_0,D_0)$. Given that $D(t)\geq 0$ for all $t\geq 0$, the constraint set $\setX$ is enlarged to be $\tilde \setX := \R^4_{\geq 0}$. Also, a constraint for the input, $u$, is defined as $\tilde \U:=\{u\in\R: 0 \leq u \leq u_{\max} \}$, where $u_{\max}$ represent the maximal antiviral dosage ($u_{\max}$ is usually determined by the drug side effects and maximal effectivity, $0\leq \eta_{\max}<1$), while sets $\setX$ is enlarged by considering $\tilde \setX:=\setX \times \R_{\geq0}$. A detailed study of the stability of impulsive systems can be seen in \cite{djorge2020stability}.

\subsection{Simulation example}\label{sec:simex}

We resume here the simulation of the virtual patient 'A', to demonstrate the impulsive control actions describing the effects of antiviral administration. It is assumed that antivirals affect the infection rate $\beta$, while the initial condition for $D$ is $D_0=0$, $\delta_D = 2$ (days$^{-1}$) and $EC_{50}^p = 75$ (mg). A scenario of $30$ days was simulated, and a permanent dose of $u_k=20$ (mg) of antivirals is administered each $T$ days, starting at $t_i=4$ days, with $T=1$, $T = 2$ and $T=0.5$. As shown in Figures~\ref{fig:OLImp1} -~\ref{fig:OLImp2}, the system response is quite different for different sampling times. For $T=1$ days, the antiviral treatment is able to decreases $V$ from the beginning. On the other hand, for $T = 2$, the treatment is unable to stop the spread of virus ($V$ continue increasing after the treatment is initiated), as it is shown in Figure~\ref{fig:OLImp2}. Clearly, the effect of larger values of $T$ is equivalent to smaller values of the dose $u_k$. In what follows, for the sake of clarity, $T$ will be fixed in $1$ day and only the (constant) value of the doses $u_k$ - together with the initial and final time of the treatment - will be modified to analyze the different outcomes.
\begin{figure}
	\centering
	\begin{subfigure}{.5\textwidth}
		\includegraphics[width=1\textwidth]{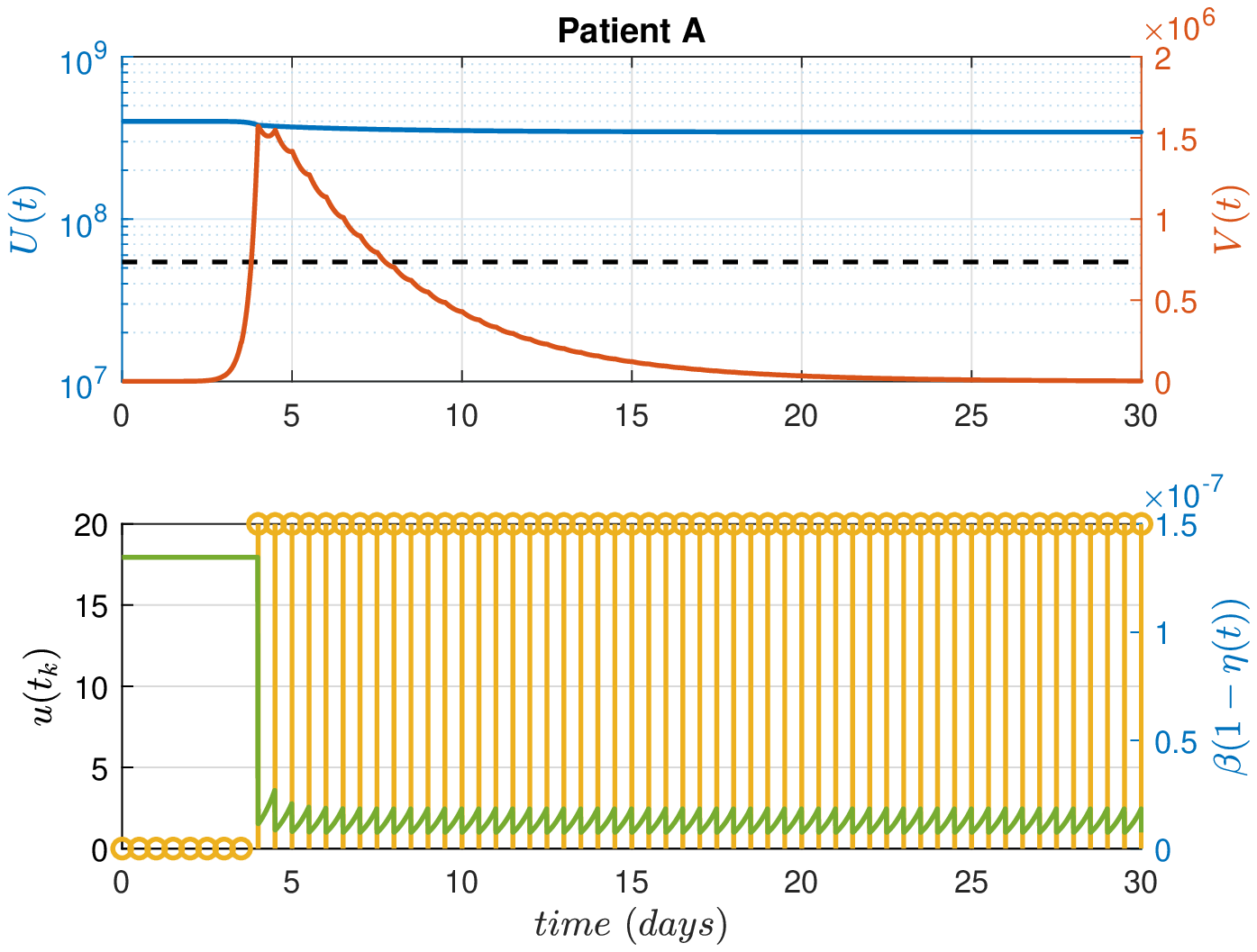}
	\caption{}
	\label{fig:OLImp1}
	\end{subfigure}%
	\hspace*{0.2truecm}
	\begin{subfigure}{.5\textwidth}
			\centering
	\includegraphics[width=1\textwidth]{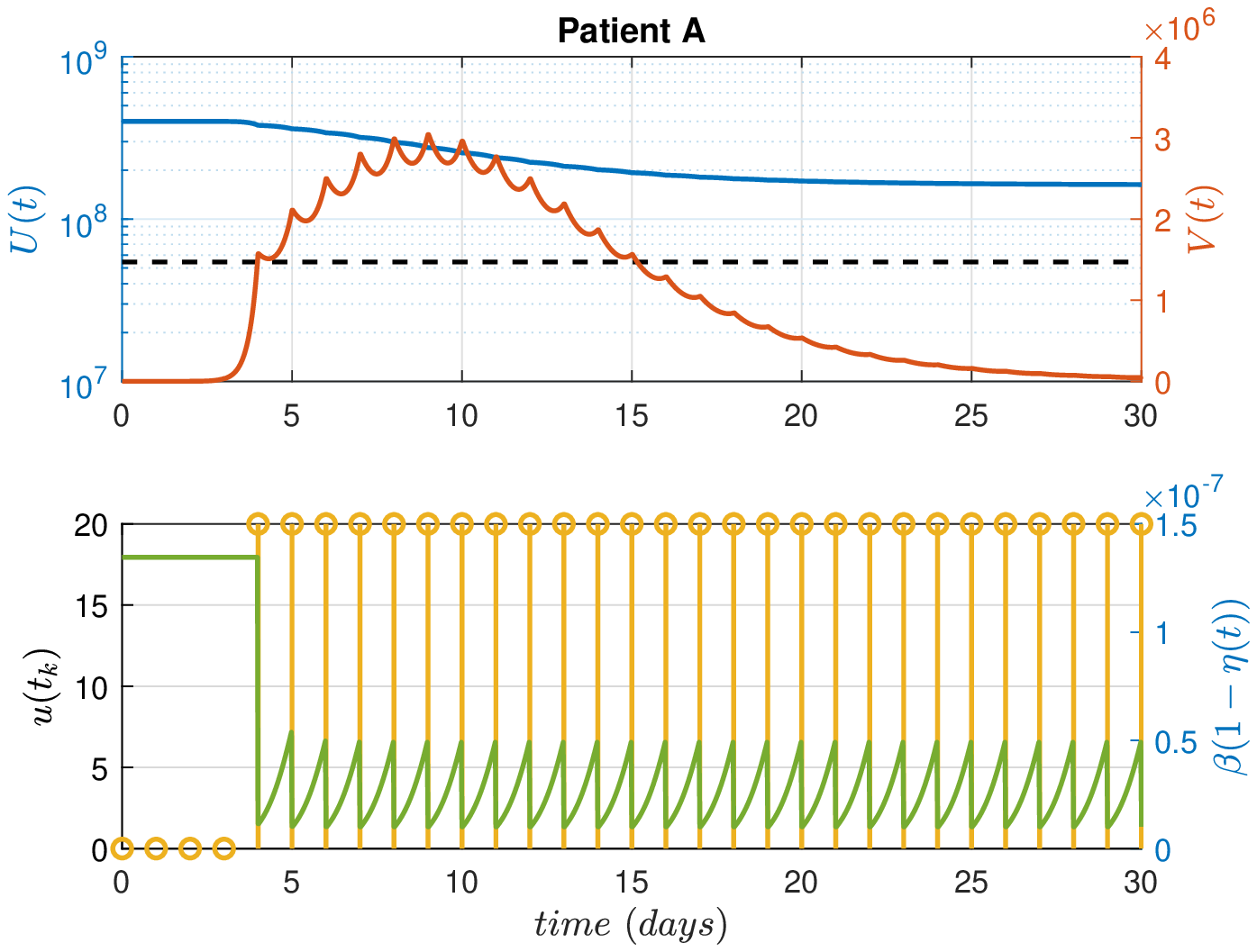}
	\caption{}
	\label{fig:OLImp2}
	\end{subfigure}
	\caption{ (a) Time evolution of virtual patient 'A', with $u=20$ mg of antivirals and $T=1$ days. (b) Time evolution of virtual patient 'A', with $u=20$ mg of antivirals and $T=2$ days.}
\end{figure}
%

\section{Control}\label{sec:control}

Control objectives in 'in host' infections can be defined in several ways. The peak of the virus load uses to be a critical index to minimize, since it is directly related to the severity of the infection and the ineffective capacity of the host. However, other indexes - usually put in a second place - are also important. This is the case of the time the infection lasts in the host over significant levels \cite{abuin2020dynamical} - including virus rebounds after reaching a pseudo steady state, and the total viral load or infected cells at the end of the infection (i.e., the AUC of $V$ and $I$). These latter indexes also informs (in a different manner) about the severity of the infection and the time during which the host is able to infect other individuals, and are directly determined by the amount of susceptible cells at the end of the infection. So the twofold control objective is defined as follows:
\begin{definition}[Control objectives]
    The control objective for the closed-loop \eqref{eq:SysOrigPKPD} consists in both, maximize the final value of susceptible/uninfected cells at the end of the infection, $U_\infty$ and minimize the virus peak, $\hat V$. We denote these objectives a Objective 1 and 2, respectively. 
\end{definition}

As it was said in the previous Section, antivirals affect the infection rate $\beta$, by the time-variant factor $(1-\eta(t))$. Accordingly, the reproduction number $\Rn$ will be also time varying, following the formula:
\begin{eqnarray} \label{eq:R_t}
	\Rn(t)   :=  \frac{\beta(1-\eta(t))p}{c \delta},
\end{eqnarray}
and the original reproduction number - i.e., the one corresponding to no treatment - will be denoted as $\Rn(0)$ for clarity ($\Rn(0)$ is the reproduction number at the outbreak of the infection, when $u_0=0$ and $\eta=0$).

We will assume in the following a single interval antiviral treatment, consisting in a single fixed dose of antiviral, applied during a finite period of time. At the outbreak of the infection ($t=0$), it is $(U(0),I(0),V(0)):=(U_0,0,\epsilon)$, with $\epsilon >0$ arbitrary small. Then, the single interval treatment is defined by the following input function:
\begin{eqnarray} \label{eq:contact}
	u_k=u(t_k) = \left\{ 
	\begin{array}{cc}
		0  &  \mbox{for}~ t_k \in [0,t_i),   \\
		u_i   & \mbox{for}~ t_k \in [t_i,t_f],\\
		0  & \mbox{for}~ t_k \in (t_f,\infty).
		\end{array} \right.
\end{eqnarray}
where $t_i < \hat t(\Rn(0))$, being $\hat t(\Rn(0))$ the time of the peak of $V(\tau)$ when no treatment is implemented, $u_i \in [0,u_{\max}]$, and $t_f>t_i$, but finite. Note that after $t_f$, $u(t_k)=0$, which means that $\eta(t) \rightarrow 0$ and $\Rn \rightarrow \Rn(0)$.

\subsection{First control objective: maximizing the final value of the uninfected cells}

The control problem we want to solve first reads as follows: for a given initial time, $t_i < \hat t$, find $u_i$ (which has an associated $\Rn_i$, and $\eta_i$) and $t_f$ (finite) to maximize $U_\infty=U_\infty(\Rn(0),U(t_f),I(t_f),V(t_f))$. This control problem accounts for the first control objective; next, somme comments will be made concerning the second one.

A critical point concerning antiviral treatments - that is usually disregarded - is that they are always transitory control actions, not permanent ones. It is not possible to maintain a given treatment for a long time, and its interruption must be explicitly considered in any antiviral schedule.

So, according to the stability results from the previous sections, the following Property holds:

\begin{property}[Upper bound for $U_\infty$] \label{propt:maxss}
	Consider system \eqref{eq:SysOrigPKPD} with $U(0)>U^*$ (or $U(0)\Rn(0)>1$). No matter which kind of antiviral treatment is implemented at time $t_i$, if it is interrupted at some finite time $t_f>t_i$ (as it is always the case), the system converges to an equilibrium state $(U_\infty,0, 0)$ with $U_\infty \leq U^*$, being $U^*$ the critical value for $U$ corresponding to no antiviral treatment, i.e., $U^* = 1/\Rn(0)$.  
\end{property}
\begin{proof}
We proceed by contradiction. Assume that $U_\infty> U^*$. Consider system \eqref{eq:SysOrigPD} for $t \geq t_f$. Since the antiviral treatment is interrupted at time $t_f$, then  $\eta(t)\searrow 0$, for $t \geq t_f$\footnote{We use the symbols $\searrow$ and $\nearrow $ to indicate that the convergence is monotonic, decreasing and increasing, respectively.}. By eq.~\eqref{eq:R_t} we have that $\Rn(t) \nearrow \Rn(0)$, for $t \geq t_f$. If we denote $U_c(t) = 1/\Rn(t)$ we have $U_c\searrow U^*$ for $t \geq t_f$.  Since $U_\infty> U^*$ then $U_\infty>U_c(t)$ for $t>T$, with $T$ large enough. Hence $(U(t),I(t),V(t))$ is converging to an equilibrium point in the unstable part, which is a contradiction. Therefore $U_\infty\le U^*$, which concludes the proof.  
\end{proof}

\begin{remark}
	From a clinical perspective, what Property~\ref{propt:maxss} establishes is more than a simple upper bound for $U_\infty$. It says that the best an antiviral treatment can do in terms of the \textbf{total amount of virus (or infected cells) at the end of the infection}, $V_{\tot}:=\int_{t=0}^{\infty} V(t) d t \approx \int_{t=0}^{\infty} \frac{p}{c}I(t) d t$, is to reach a minimal value intrinsically determined by the system parameters ($\Rn(0)$). Furthermore, the instantaneous \textbf{peak of $V(t)$}, for $t>0$, which is the other critical index for the severity of the infection (whose minimization is the second control objective), is independent of the latter lower limit (as shown later on), and can be minimized while maintaining $V_{\tot}$ at its minimal value.
	This represents a new paradigm concerning what (and what not) antiviral treatments can do in acute infections.
\end{remark}

In the search of such a value, the next definition is stated.
\begin{definition}[Goldilocks antiviral dose]
	The goldilocks antiviral dose (GAD), $u^g=u^g(t_i)$, is the one that, if applied at $t_i < \hat t(\Rn(0))$, produces $U_\infty(\Rn^{g},U(t_i),I(t_i),V(t_i)) = U^*$, where $\Rn^g$ is determined by $u^g$, at steady state\footnote{$\Rn^g := \frac{\beta(1-\eta^g(t))p}{c \delta}$, is assumed to be fixed, for simplicity, even when we know that $\eta^g(t)$ is periodic.}.
\end{definition}
%

\begin{remark}[$u^g$ computation]
	Given $t_i$ and $\Rn(0)$, $u^{g} =u^{g}(U(t_i),I(t_i),V(t_i))=u^{g}(t_i)$ can be obtained, numerically, by means of Algorithm~\ref{alg:uopt}.
\end{remark}
\begin{algorithm}
	\SetAlgoLined
	$u_k=0$, $U^*=1/\Rn(0)$\;
	Compute $U_i$, $I_i$ and $V_i$ by integrating system \eqref{eq:SysOrig} from $0$ to $t_i$, starting at $(U_0,I_0,V_0)$\;
	Compute $U(t_f)$ by integrating system \eqref{eq:SysOrigPKPD} form $t_i$ to $t_f$, starting at $(U_i,I_i,V_i)$, with $u_k$\; 
	\While{$U(t_f) <= U^*$}{
	    $u_k = u_k + 0.001$\;
		Compute $U(t_f)$ by integrating system \eqref{eq:SysOrigPKPD} form $t_i$ to $t_f$, starting at $(U_i,I_i,V_i)$, with $u_k$\;
	}
	$u^{g}=u_k$\;
	\caption{Computation of $u^{g}(t_i)$}
	\label{alg:uopt}
\end{algorithm}

Clearly, Goldilocks antiviral treatment cannot be applied indefinitely, since $t_f$ is finite. However, it can be applied up to a time $t_f$ large enough such that $(U(t_f), I(t_f), V(t_f))$ is arbitrarily close to $(U^*, 0, 0)$ from above. This latter scenario is denoted as quasi steady state (QSS), and it allows us to introduce the following definition.
\begin{definition}[Quasi optimal single interval antiviral treatment]\label{teo:cont_act}
	Consider a given starting time, $t_i \in (0,\hat t(\Rn(0)))$. Then, the quasi optimal single interval antiviral treatment consists in applying $u^{g}$, up to a time $t_f$ large enough for the the system to reach a QSS condition (i.e., $U(t_f) \approx U^*$ $I(t_f) \approx 0$, $V(t_f)\approx 0$).
\end{definition}
\begin{remark}
	Clearly, the latter definition refers to a quasi optimal single interval control action, because larger values of $t_f$ will produce values of $U(t_f)$, $I(t_f)$ and $V(t_f)$ closer to $U^*$, $0$ and $0$, respectively, so $U_\infty(\Rn(0),U(t_f),I(t_f),V(t_f))$ will be closer to $U^*$. 
\end{remark}
The next Theorem, which is one of the main contribution of the work, summarizes the latter results by means of a classification that consider every possible single interval treatment case.

\begin{theorem}[Single interval antiviral treatment scenarios]\label{teo:cont_sce}
	Consider system \eqref{eq:SysOrig} with initial conditions $(U(0),I(0),V(0))=(U_0,0,\epsilon)$, with $\epsilon >0$ arbitrary small, and $\Rn(0)$ such that $U(0)>U^*$. Consider also single interval antiviral treatment (as the one defined in \eqref{eq:contact}), with a given starting time $t_i \in (0,\hat t(\Rn(0)))$, and a finite final time $t_f$. Define soft and strong treatments depending on if $u_i<u^{g}$ or $u_i>u^{g}$, respectively. Define also long and short term treatments depending on if the system reaches or does not reach a QSS at $t_f$. Then, the following scenarios can take place:
	\begin{enumerate}
	\item Quasi optimal single interval antiviral treatment: if $u_i=u^{g}$, and $t_f$ is such that $(U(t_f),I(t_f),V(t_f))$ reaches a QSS, then $U_\infty(\Rn(0),U(t_f),I(t_f),V(t_f)) \approx U^*$. Furthermore, the closer is $U(t_f)$ to $U^*$ (or $I(t_f)$ and $V(t_f)$ to zero), the closer will be $U_\infty(\Rn(0),U(t_f),I(t_f),V(t_f))$ to $U^*$.
	\item Soft long-term antiviral treatment: if $(U(t_f),I(t_f),V(t_f))$ reaches a QSS, and $U(t_f) < U^*$, then $U_\infty(\Rn(0),U(t_f),I(t_f),V(t_f)) \approx U(t_f) <U^*$; \textit{i.e.}, $U(t)$ will remain approximately constant for $t \geq t_f$. Furthermore, the softer a soft long term antiviral treatment is, the smaller will be $U_\infty(\Rn(0),U(t_f),I(t_f),V(t_f))$.
	\item Strong long-term antiviral treatment: if $(U(t_f),I(t_f),V(t_f))$ reaches a QSS, and $U(t_f) >U^*$, a \textbf{second outbreak} wave will necessarily take place at some time $\hat{\hat{t}} > t_f$ and, finally, the system will converge to an $U_\infty(\Rn(0),U(t_f),I(t_f),V(t_f))< U^*$. Furthermore, the stronger a strong long term antiviral treatment is, the larger will be the second wave and the smaller will be $U_\infty(\Rn(0),U(t_f),I(t_f),V(t_f))$. 
	\item Short-term antiviral treatment: if $(U(t_f),I(t_f),V(t_f))$ does not reach a QSS (\textit{i.e}, if $V(t_f) \not\approx 0$), then soft, strong and Goldilocks dose will necessarily produce values of $U_\infty(\Rn(0),U(t_f),I(t_f),V(t_f))$ significantly smaller than the one obtained by quasi optimal single interval treatment. In general, larger values of $V(t_f)$ will produce smaller values of $U_\infty(\Rn(0),U(t_f),I(t_f),V(t_f))$. 
	This case includes the particular case where the treatment is interrupted at the very moment at which $U(t_f)=U^*$, but with $V(t_f) \not\approx 0$. This means that the critical value of $U$ needs to be reached as a steady state, not as a transitory one.
	\end{enumerate}
\end{theorem}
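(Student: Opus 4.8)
The plan is to treat the four cases one at a time, in each one reducing the claim to properties of the closed-form map $U_\infty(\Rn(0),\cdot,\cdot,\cdot)$ of \eqref{eq:Uinfty} evaluated at the end-of-treatment state $(U(t_f),I(t_f),V(t_f))$. First I would record the three ingredients used throughout: (a) $U_\infty(\Rn(0),\cdot,\cdot,\cdot)$ is continuous on $\setX$ and, by Lemma~\ref{lem:Uinf_opt}, attains its maximum $U^*$ over $\setX$ only at $(U^*,0,0)$; (b) the monotonicities of Property~\ref{propt:sinfty} and the bound $U_\infty\le U^*$ of Property~\ref{propt:maxss}; (c) a monotone-dependence fact: since $u\mapsto\eta$ is increasing through \eqref{eq:PK}--\eqref{eq:PD}, a larger $u_i$ yields a smaller $\Rn^g$, hence --- either by monotonicity of the Lambert branch in \eqref{eq:Uinfty} with $\Rn^g$ frozen as in the paper's convention, or by an ODE comparison on \eqref{eq:SysOrigPD} in the spirit of \cite{abuin2020dynamical} --- a larger treated final size $U_\infty(\Rn^g,U(t_i),I(t_i),V(t_i))$, which at QSS coincides with $U(t_f)$; and since $\dot U\le0$ always, $U(t_f)$ is moreover non-increasing in $t_f$. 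By the definition of the Goldilocks dose, $u^g$ is precisely the dose for which this treated final size equals $U^*$, so at QSS one has $U(t_f)<U^*$ when $u_i<u^g$, $U(t_f)\to U^*$ when $u_i=u^g$, and $U(t_f)>U^*$ when $u_i>u^g$. Finally, solutions of \eqref{eq:SysOrigPKPD} never vanish in finite time, so $V(t_f)>0$ in every case, even at QSS.

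\textbf{Cases 1 and 2.} For case 1, if $u_i=u^g$ and $t_f$ is large enough to reach QSS, then by the above $(U(t_f),I(t_f),V(t_f))\to(U^*,0,0)$, and continuity of $U_\infty(\Rn(0),\cdot,\cdot,\cdot)$ together with $U_\infty(\Rn(0),U^*,0,0)=U^*$ gives $U_\infty\approx U^*$; the refinement holds because $U(t_f)\searrow U^*$ and $I(t_f),V(t_f)\searrow0$ as $t_f$ grows, which by Property~\ref{propt:sinfty}, items (ii) and (iv), makes $U_\infty(\Rn(0),U(t_f),I(t_f),V(t_f))$ increase monotonically to $U^*$. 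For case 2, if $u_i<u^g$ and QSS is reached, then $U(t_f)<U^*$ while $I(t_f),V(t_f)\approx0$, so the end-of-treatment state is arbitrarily close to the equilibrium $(U(t_f),0,0)\in\setX_s^{st}$, a fixed point of the untreated flow (Theorem~\ref{theo:AS}); hence $U_\infty\approx U(t_f)<U^*$, i.e.\ $U$ stays essentially constant for $t\ge t_f$, and "softer $\Rightarrow$ smaller $U_\infty$" is exactly the monotone dependence of $U(t_f)$ on $u_i$ (consistent with Property~\ref{propt:sinfty}(iii)).

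\textbf{Cases 3 and 4.} For case 3, if $u_i>u^g$ and QSS is reached, then $U(t_f)>U^*$ and $V(t_f)>0$. For $t\ge t_f$ the treatment is stopped, so $\eta(t)\searrow0$ and, by \eqref{eq:R_t}, $\Rn(t)\nearrow\Rn(0)$, whence the instantaneous critical value $1/\Rn(t)$ decreases to $U^*$; since $\dot U\le0$ and $V$ is negligible just after $t_f$, $U$ remains above $U^*$ on a right-neighbourhood of $t_f$, so there is a finite $\hat{\hat{t}}>t_f$ with $U(\hat{\hat{t}})\Rn(\hat{\hat{t}})>1$, which by the spread criterion of Section~\ref{sec:infectmod} (valid for $c\gg\delta$) forces $\dot V(\hat{\hat{t}})>0$: a second outbreak. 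From $\hat{\hat{t}}$ onward Theorem~\ref{theo:AS} applies to the untreated system, so the trajectory converges to $(U_\infty,0,0)$ with $U_\infty\le U^*$, strictly because the state at $\hat{\hat{t}}$ is not $(U^*,0,0)$ (Lemma~\ref{lem:Uinf_opt}); and "stronger $\Rightarrow$ larger second wave and smaller $U_\infty$" follows since a larger $u_i$ gives a larger $U(t_f)\approx U(\hat{\hat{t}})>U^*$, which enlarges the relapse peak (peak monotone in the initial susceptible level, cf.\ \cite{abuin2020dynamical}) and, by Property~\ref{propt:sinfty}(ii), decreases $U_\infty$. For case 4, if QSS is not reached then $V(t_f)\not\approx0$, and Property~\ref{propt:sinfty}(iv) gives $U_\infty(\Rn(0),U(t_f),I(t_f),V(t_f))<U_\infty(\Rn(0),U(t_f),0,0)\le U^*$, this value being decreasing in $V(t_f)$; so every dose --- soft, strong, or $u^g$ --- is strictly worse than the quasi-optimal value $\approx U^*$, and stopping exactly when $U(t_f)=U^*$ but $V(t_f)\not\approx0$ yields $U_\infty(\Rn(0),U^*,I(t_f),V(t_f))<U_\infty(\Rn(0),U^*,0,0)=U^*$ by items (iv)--(v) of Property~\ref{propt:sinfty}, so $U^*$ must be attained as a steady state, not transitorily. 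This exhausts the classification.

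\textbf{Main obstacle.} I expect ingredient (c) to be the hard part: \eqref{eq:SysOrigPD} is coupled and nonlinear, so the monotone dependence of the end-of-treatment state $U(t_f)$ (equivalently, of the treated final size) on the dose $u_i$ must be justified --- by a Gronwall/differential-inequality bootstrap through the $I$- and $V$-equations, by a cooperative-systems argument after a sign change, or by reducing to the closed form \eqref{eq:Uinfty} under the paper's frozen-$\Rn^g$ convention. A second, milder difficulty is making the "second outbreak" claim of Case 3 rigorous, i.e.\ ensuring $U$ does not drift below $U^*$ before $\Rn(t)$ has recovered enough, which again rests on the $c\gg\delta$ reduction used throughout.
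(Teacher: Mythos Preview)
Your proposal is correct and follows essentially the same route as the paper's proof: both arguments reduce each of the four scenarios to properties of the map $U_\infty(\Rn(0),\cdot,\cdot,\cdot)$ evaluated at the end-of-treatment state, invoking Property~\ref{propt:sinfty} (items (ii)--(iv)), Lemma~\ref{lem:Uinf_opt}, and the stability classification of Section~\ref{sec:equil_stab}. Your treatment of Case~3 is in fact slightly more explicit than the paper's --- you spell out the mechanism for the second outbreak via $\Rn(t)\nearrow\Rn(0)$ and the spread criterion, whereas the paper simply asserts that the trajectory leaves the unstable set and ``significantly increases $V(t)$''; and your Case~4 uses Property~\ref{propt:sinfty}(iv) directly where the paper phrases the same conclusion through the $\Omega(\varepsilon)$ formulation of Lemma~\ref{lem:Uinf_opt}. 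The ``main obstacle'' you flag --- the monotone dependence of $U(t_f)$ on $u_i$ --- is not proved in the paper either: it is simply asserted (``Softer antiviral treatment produces smaller values of $U(t_f)$'', ``Stronger antiviral treatment produces greater values of $U(t_f)$''), so your proof is at the same level of rigor as the original on this point.
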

\begin{proof}
	The proof follows from the stability results shown in Sections~\ref{sec:equil_stab}, and \eqref{sec:InitCond}:
	\begin{enumerate}
	\item Given that $u_i=u^g$ is implemented for $t \in[t_i,t_f]$, $t_f$ is finite but large enough and $U_\infty (\Rn^g,U(t_i),I(t_i),V(t_i)) = U^*$, then $U(t_f)$ approaches $U^*$ and $V(t_f)$ approaches zero, from above, as $t_f$ increases. This means that at $t_f$, when the treatment is interrupted, $(U(t_f),I(t_f),V(t_f))$ is close to the unstable equilibrium set $\setX_s^{un}$. Then, by Property~\ref{propt:sinfty}.(ii), function $U_\infty(\Rn(0),U(t_f),I(t_f),V(t_f))$ is such that the closer $(U(t_f),I(t_f),V(t_f))$ is to the equilibrium point $(U^*,0,0)$, with $U(t_f)>U^*$, the closer will be $U_\infty(\Rn(0),U(t_f),I(t_f),V(t_f))$ to $U^*$, with $U_\infty<U^*$ (see the 'pine' shape of $U_\infty$ around $U^*$, for $I\approx 0$, in Figure~\ref{fig:Uinf_U})\footnote{Indeed, by the $\epsilon-\delta$ stability of the equilibrium state $(U^*,0,0)$, for each (arbitrary small) $\epsilon>0$, it there exists $\delta>0$, such that, if the system starts in a ball of radius $\delta$ centered at $(U^*,0,0)$, it will keeps indeterminately in the ball of radius $\epsilon$ centered at $(U^*,0,0)$. Furthermore, it is possible to define invariant sets around $(U^*,0,0)$ by considering the level sets of the Lyapunov function \eqref{ec:lya1}, with $\bar U=U^*$, or even the level sets of function $J(U,I,V):=U^*- U_\infty(\Rn,U,I,V)$, with a fixed $\Rn>0$. This way, once the system enters any arbitrary small level set of the latter functions, it cannot leaves the set anymore. See, Figure~\ref{fig:LevCurv}}.
	\item Given that $(U(t_f),I(t_f),V(t_f))$ approaches a steady state with $U(t_f) < U^*$, then $(U(t_f),I(t_f),V(t_f))$ is close to the stable equilibrium set $\setX_s^{st}$, when the treatment is interrupted. Then, the system will converge to an equilibrium with $U_\infty(\Rn(0),U(t_f),I(t_f),V(t_f))$ close to $U(t_f)$. Softer antiviral treatment  produces smaller values of $U(t_f)$ and, by Property~\ref{propt:sinfty}.(iii), smaller values of $U(t_f)$ produce smaller values of $U_\infty(\Rn(0),U(t_f),I(t_f),V(t_f))$. 
	\item Given that $(U(t_f),I(t_f),V(t_f))$ approaches a steady state with $U(t_f) > U^*$, then $(U(t_f),I(t_f),V(t_f))$ is close to the unstable equilibrium set, $\setX_s^{un}$, when the treatment is interrupted. Then, the system will converges to an equilibrium in the stable equilibrium set, $\setX_s^{st}$, with $U_\infty(\Rn(0),U(t_f),I(t_f),V(t_f)) < U^*$. Stronger antiviral treatment produces greater values of $U(t_f)$ and, by Property~\ref{propt:sinfty}.(ii), values of $U(t_f)$ farther from $U^*$, from above, produce values of $U_\infty(\Rn(0),U(t_f),I(t_f),V(t_f))$ farther from $U^*$, from below. When $U(t_f)$ is significantly greater than $U^*$, no matter how large is $t_f$ and how small is $V(t_f)$\footnote{Note that as long as $t_f$ is finite, $(U(t_f),I(t_f),V(t_f))$ cannot reach $\setX_s^{un}$, and so $V(t_f)$, even when arbitrary small, is greater than zero. So, once the social distancing is interrupted, the system evolves to an equilibrium in $\setX_s^{un}$.}, the system will evolve to an equilibrium in $\setX_s^{st}$, with $U_\infty(\Rn(0),U(t_f),I(t_f),V(t_f))$ significantly smaller than $U^*$. Furthermore, to go from $U(t_f)$ to $U_\infty(\Rn(0),U(t_f),I(t_f),V(t_f))$, for $t>t_f$, the system significantly increase $V(t)$, and this effect is known as a second outbreak wave.
	\item Given that $(U(t_f),I(t_f),V(t_f))$ is a transitory state, then it does not approach any equilibrium. This means that $V(t_f)$ is significantly greater than $0$, and according to Lemma~\ref{lem:Uinf_opt}, in Appendix~\hyperlink{sec:app2}{2}, the maximum of $U_\infty(\Rn(0),U(t_f),I(t_f),V(t_f))$ over 	$\Omega(\varepsilon) = \left\{(U,I,V) \in \mathbb R^3_{\ge0} : I\ge\varepsilon,V\ge\varepsilon\right\}$ is given by $- W(- \Rn U^* e^{-\Rn (U^* + \varepsilon + \frac{\delta}{p}\varepsilon)})/\Rn$, which is a decreasing function of $\varepsilon$, and reaches $U^*$ only when $\varepsilon=0$ (see Figure~\ref{fig:Uinf_U}). Then, independently of the value of $U(t_f)$, $U_\infty(\Rn(0),U(t_f),I(t_f),V(t_f))$ will be (maybe significantly) smaller than the one obtained with quasi optimal single interval treatment, in which $\varepsilon \approx 0$. %
\end{enumerate}
\end{proof}
\begin{figure}
	\centering
	\includegraphics[width=0.75\columnwidth]{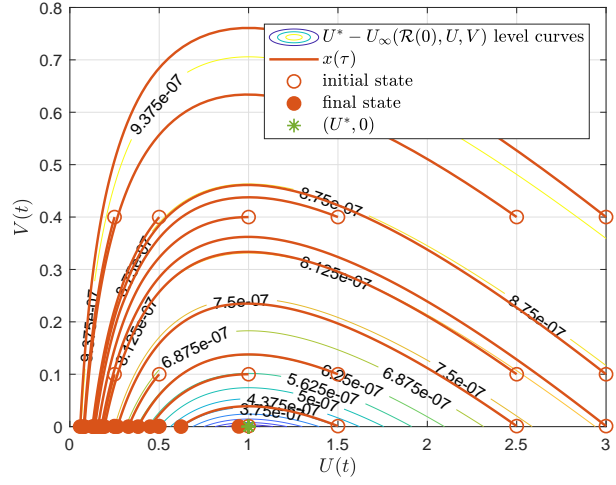}
	\caption{\small{Phase Portrait for system \eqref{eq:SysOrig} ($\beta = 1/2$, $\delta = 1/5$, $p = 2$, $c = 5$) in the $U,V$ plane ($I=(c/p)V$), for different starting points (red lines), and level curves of function $J(U,I,V):=U^*-U_\infty(\Rn,U,I,V)$, $I=(c/p)V$. Function $J$ is positive for all $(U,V) \neq (U^*,0)$, is null at $(U^*,0)$ and $\dot J(U,V)=0$ along the solution of system \eqref{eq:SysOrig} (since $U_\infty(\Rn,U,I,V)$ is). So its level sets are arbitrary small invariant sets around $(U^*,0)$. Note that starting states close to $(U^*,0)$ produce time evolution close to $U^*$, as determines the $\epsilon-\delta$ stability.}}
	\label{fig:LevCurv}
\end{figure}
%

\subsection{Second control objective: minimizing the virus peak} \label{sec:seccont}

The quasi optimal single interval treatment clearly accounts for a steady state condition, given that any realistic treatment needs to be interrupted at a finite time. Furthermore, given that only one antiviral dose, $u_i$, is considered for the treatment, once the quasi optimal single interval treatment is determined ($u_i=u^g$), also is the peak (maximum over $t$) of the virus, $\hat V$: i.e., there is a unique $\hat V$ for each single interval control action, $u_i$.

However, if a more general control action is considered, in such a way that $u_k$ assume several values in the interval from $t_i$ to $t_f$, $\hat V$ can be arbitrarily reduced. Indeed, given that $U_\infty$ depends only on the fact that $U\approx U^*$ and $V \approx 0$ at $t_f$, then stronger antiviral doses can be used at the beginning of the treatment to lower the peak of $V$. If for instance two consecutive single interval control actions are implemented ---the first one with a high dose, applied from $t_i$ to $t_1$, and the second one with the quasi optimal antiviral, $u^g(t_1)$, applied from $t_1$ to a large enough $t_f$--- a lower peak of $V$ will necessarily be obtained in contrast to one corresponding to the quasi optimal single interval control. 

Although this chapter is not devoted to analyze control strategies different from the single interval one, it is worth to remark this latter point since it states that: (1) both control objectives are independent, in the the sense that if a given upper bound for $V$ is stated from the the beginning (to avoid complication and/or to reduce the infectivity of the host) it is in general possible to design control strategies that both, make $V(t)$ not to overpasses the upper bound, and make $U_\infty \approx U^*$, and (2) the entire concept of a maximum or peak for $V$, for a given treatment, has sense only when $U_\infty \leq U^*$; since otherwise, a rebounds of the virus will occurs once the treatment is interrupted, and a new peak for $V$ may be reached.

Figures~\ref{fig:2step} and \ref{fig:2step2}, in the Simulation section, show an example of a two-steps interval treatment that produces a peak of $V$ smaller than the one corresponding to the quasi optimal single interval one.

\section{Simulation results}\label{sec:simres}

In this section each of the cases of Theorem \ref{teo:cont_sce}, together with the case of two-steps interval control action of Subsection \ref{sec:seccont} are simulated for data coming from patient 'A', introduced in section \ref{sec:IntPatA} and \ref{sec:simex}. As it was already said, $\delta_D = 2$ (days$^{-1}$), $EC_{50}^p = 75$ (mg) and the sampling time is selected to be $T=1$ day. Initial conditions are given by $(U_0,I_0,V_0)=(4\times 10^8,0,0.31)$. Also, recall that $U^*=5.44\times 10^7$ and the untreated peak of $V$ is given by $\hat V=1.98 \times 10^7$.

\subsection{Strong long-term treatment. Virus rebound}

Figure~\ref{fig:rebo} shows the time evolution of $U$ (logarithmic scale), $V$, and $u_k$ for patient 'A', when strong long-term antiviral treatment is implemented. The treatment starts at $t_i=4$ days and finished at $t_f=30$ days, while several strong doses are administered: $u_i=[21, 25, 35]$ mg. 

As it can be seen, at $t_f$ the value of $U$ is greater than $U^*$ while $V\approx 0$, so the viral load $V$ rebounds after some time, producing a second (and larger) peak. More important, $U_\infty$ ends up at a value significantly smaller than $U^*$. The values of $U_\infty$ and $\hat V$ corresponding to the three doses are given by $U_\infty = [ 7.98\times 10^6, 3.79\times 10^6, 1.07\times 10^5]$, and $\hat V = [4.84\times 10^6, 7.86\times 10^6, 1.34\times 10^7]$, respectively.

To have a better idea of how the system behaves around state $(U^*,0)$, Figure~\ref{fig:rebo2} shows the phase portrait in the space $U,V$, together with the level curves of the Lyapunov function $J(U,I,V):=U^*-U_\infty(\Rn(0),U,I,V)$. At time $t_f$, when the treatment is interrupted, $V(t_f) \approx 0$ and $U(t_f) > U^* $, so the system is close to an unstable equilibrium point. So, for $t>t_f$ the state is attracted to an equilibrium in the AS equilibrium set $\setX_s^{st}$, following outer level curves of $J$. Outer level curves of $J$ means both, a small $U_\infty$ and a large $\hat V$.
\begin{figure}
	\centering
	\begin{subfigure}{.5\textwidth}
		\includegraphics[width=1\textwidth]{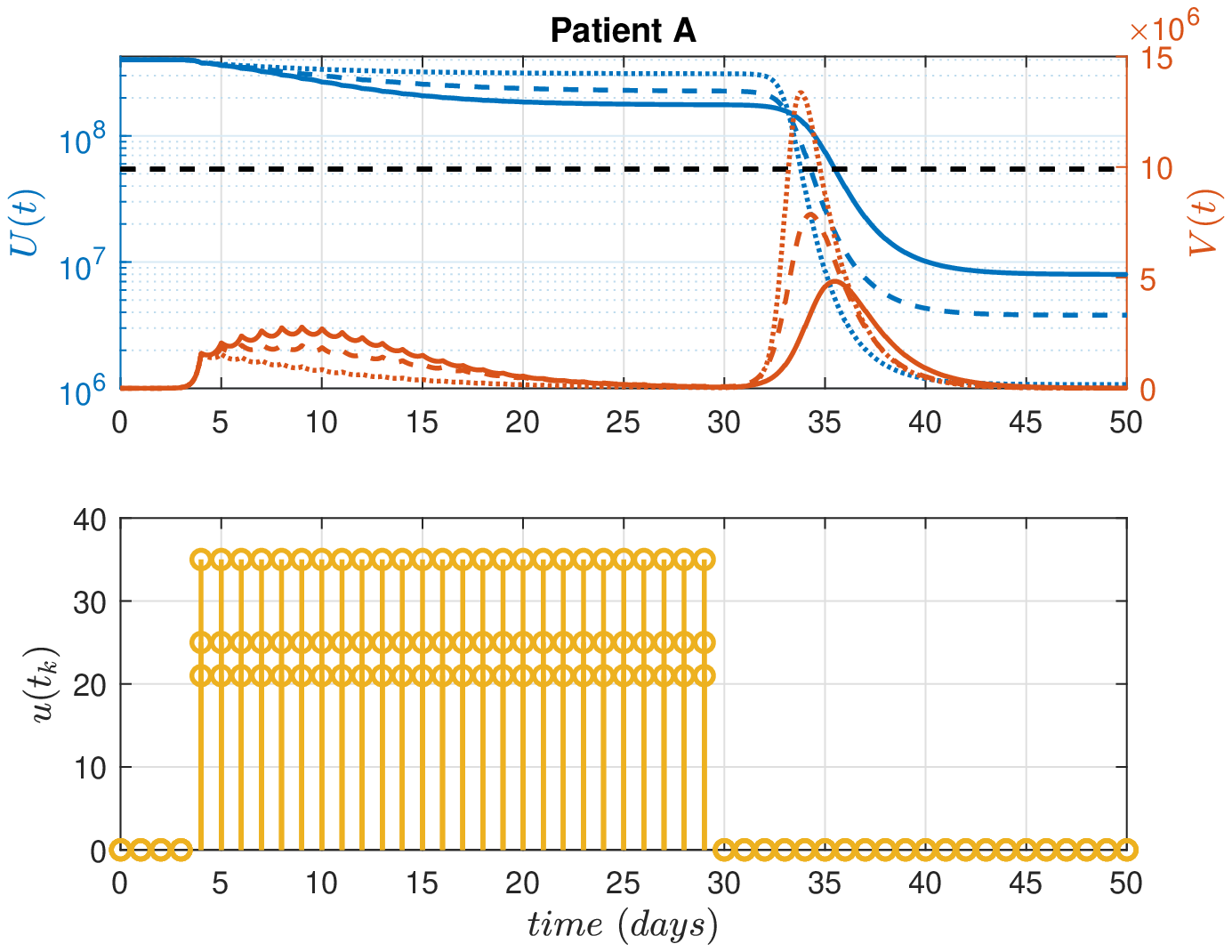}
	\caption{}
	\label{fig:rebo}
	\end{subfigure}%
	\hspace*{0.2truecm}
	\begin{subfigure}{.5\textwidth}
			\centering
	\includegraphics[width=1\textwidth]{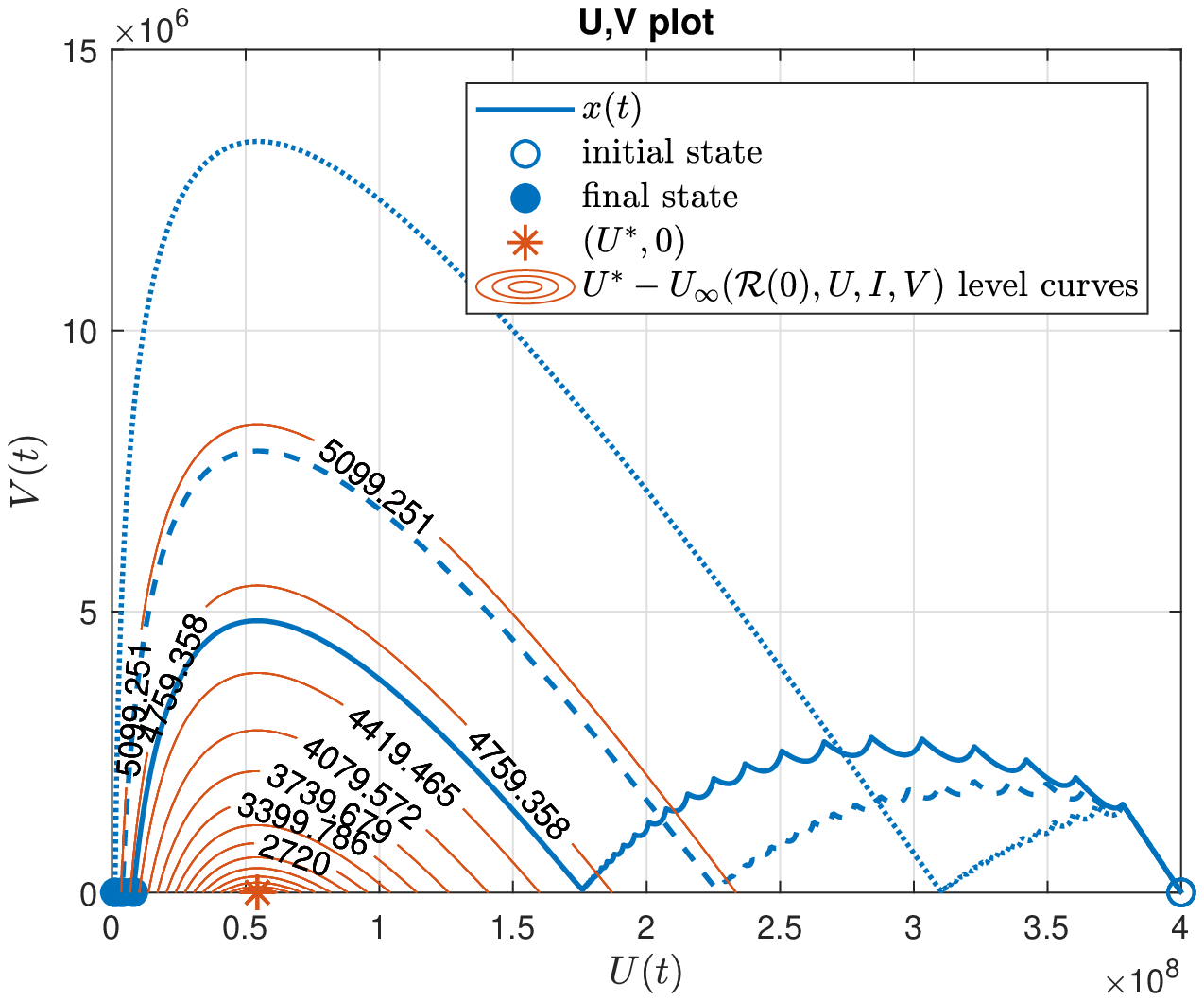}
	\caption{}
	\label{fig:rebo2}
	\end{subfigure}
		\caption{\small{(a) Time evolution of virtual patient 'A', with different doses of antiviral:  $u_i=21$ mg, solid line, $u_i=25$ mg, dashed line, $u_i=35$ mg, dotted line. (b) Phase portrait in the $U,V$ space, and level curves of the Lyapunov function $J(U,I,V):=U^*-U_\infty(\Rn(0),U,I,V)$, around $(U^*,0)$.}}
\end{figure}

\subsection{Soft long-term treatment}

Figure~\ref{fig:subop} shows the time evolution of $U$ (logarithmic scale), $V$, and $u_k$, when soft long term antiviral treatment is implemented. The treatment starts at $t_i=4$ days and finished at $t_f=30$ days, while several soft doses are administered: $u_i=[4, 6, 8]$ mg. 

As it can be seen, at $t_f$ the value of $U$ is smaller than $U^*$, while $V\approx 0$, so the viral load $V$ decreases after the treatment is interrupted. The values of $U_\infty$ and $\hat V$ corresponding to the three doses are given by $U_\infty = [ 8.98\times 10^6, 1.90\times 10^7, 3.25\times 10^7]$, and $\hat V = [1.36\times 10^7, 1.16\times 10^7, 9.70e\times 10^6]$.

Figure~\ref{fig:subop2} shows the phase portrait in the space $U,V$, together with the level curves of the Lyapunov function $J(U,I,V)$. At time $t_f$, when the treatment is interrupted, $V(t_f) \approx 0$ and $U(t_f) < U^* $, so the system is close to a stable equilibrium point. So, for $t>t_f$ the state remains almost unmodified.

\begin{figure}
	\centering
	\begin{subfigure}{.5\textwidth}
		\includegraphics[width=1\textwidth]{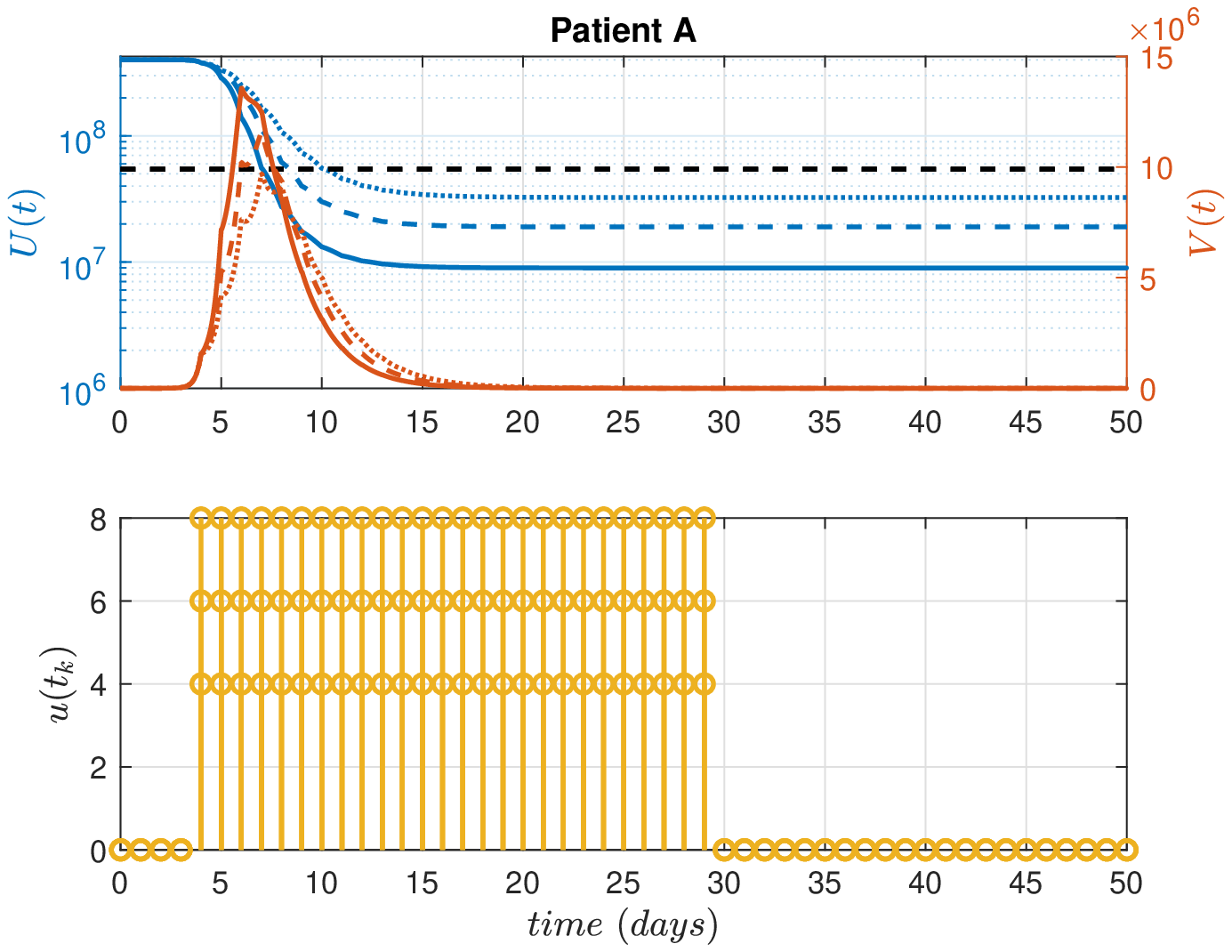}
	\caption{}
	\label{fig:subop}
	\end{subfigure}%
	\hspace*{0.2truecm}
	\begin{subfigure}{.5\textwidth}
	\centering
	\includegraphics[width=1\textwidth]{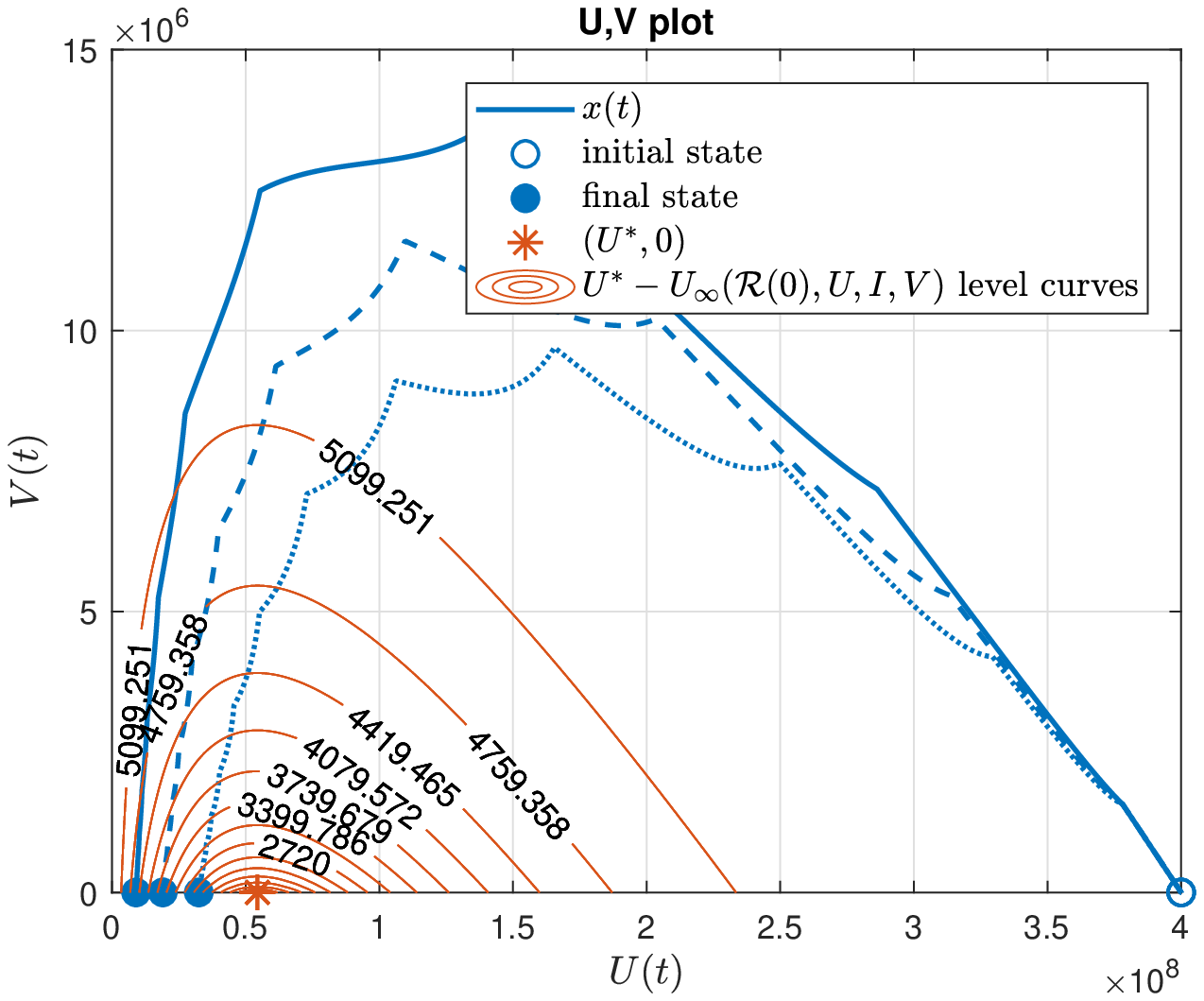}
	\caption{}
	\label{fig:subop2}
	\end{subfigure}
	\caption{\small{(a) Time evolution of virtual patient 'A', with different doses of antiviral:  $u_i=4$ mg, solid line, $u_i=6$ mg, dashed line, $u_i=8$ mg, dotted line. (b) Phase portrait in the $U,V$ space, and level curves of the Lyapunov function $J(U,I,V):=U^*-U_\infty(\Rn(0),U,I,V)$, around $(U^*,0)$.}}
\end{figure}

\subsection{Quasi optimal single interval treatment} \label{sec:optsimul}

Figure~\ref{fig:qopt} shows the time evolution of $U$ (logarithmic scale), $V$, and $u_k$, when the quasi optimal single interval antiviral treatment is administered. The treatment starts at $t_i=4$ days and finished at $t_f=30$ days, while the Goldilocks dose is given by $u_i=u^g(t_i)=10.5$ mg. The values of $U_\infty$ and $\hat V$ are given by $U_\infty =5.34\times 10^7$ and $\hat V = 7.73\times 10^6$, respectively

Figure~\ref{fig:qopt2} shows the phase portrait in the space $U,V$, together with the level curves of the Lyapunov function $J(U,I,V)$. As it can be seen, the system follows the only one trajectory that goes directly from $(U_0,V_0)$ to $(U^*,0)$: any other path goes necessarily to an equilibrium with $U_\infty <U^*$. 
\begin{figure}
	\centering
	\begin{subfigure}{.5\textwidth}
		\includegraphics[width=1\textwidth]{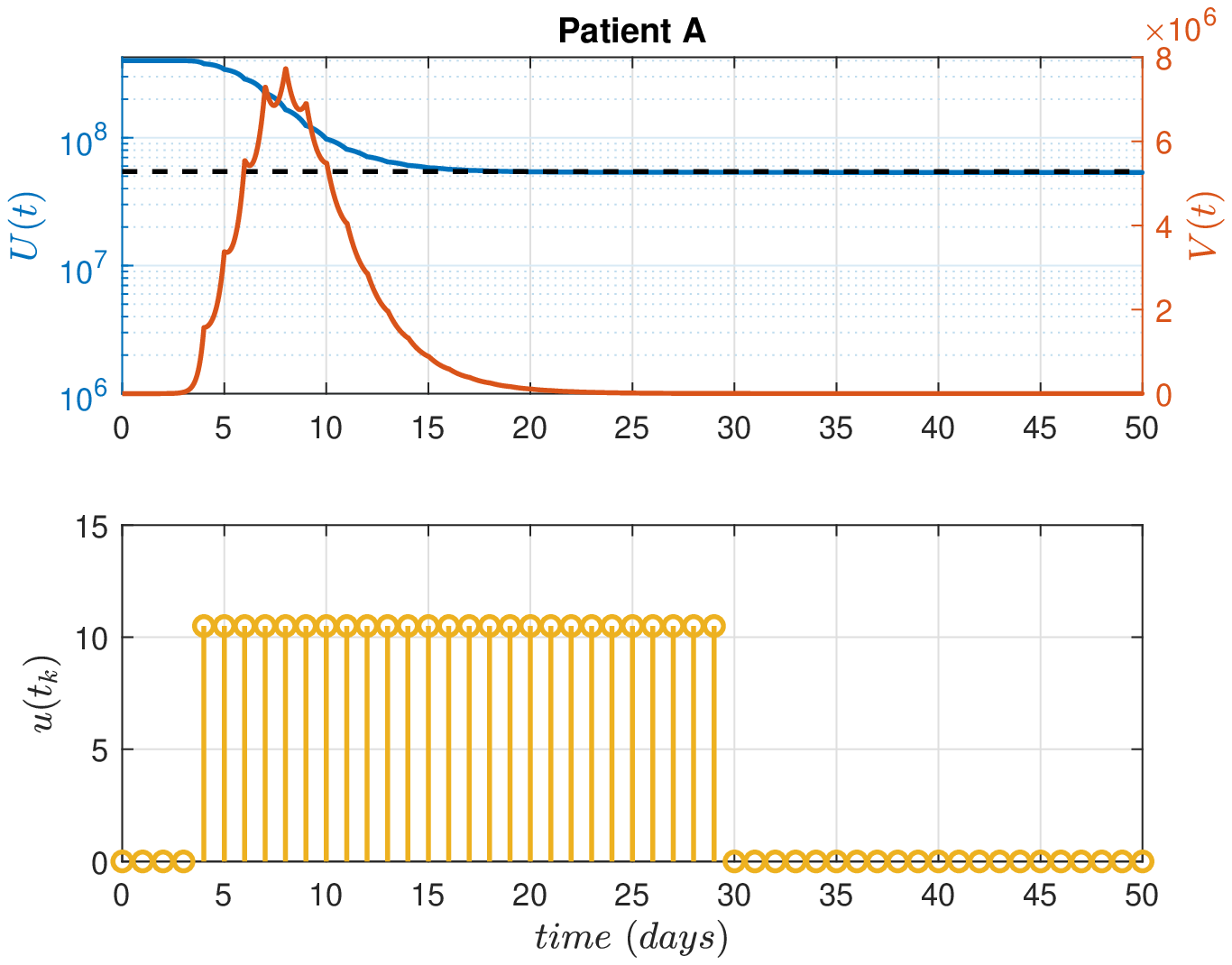}
	\caption{}
	\label{fig:qopt}
	\end{subfigure}%
	\hspace*{0.2truecm}
	\begin{subfigure}{.5\textwidth}
			\centering
	\includegraphics[width=1\textwidth]{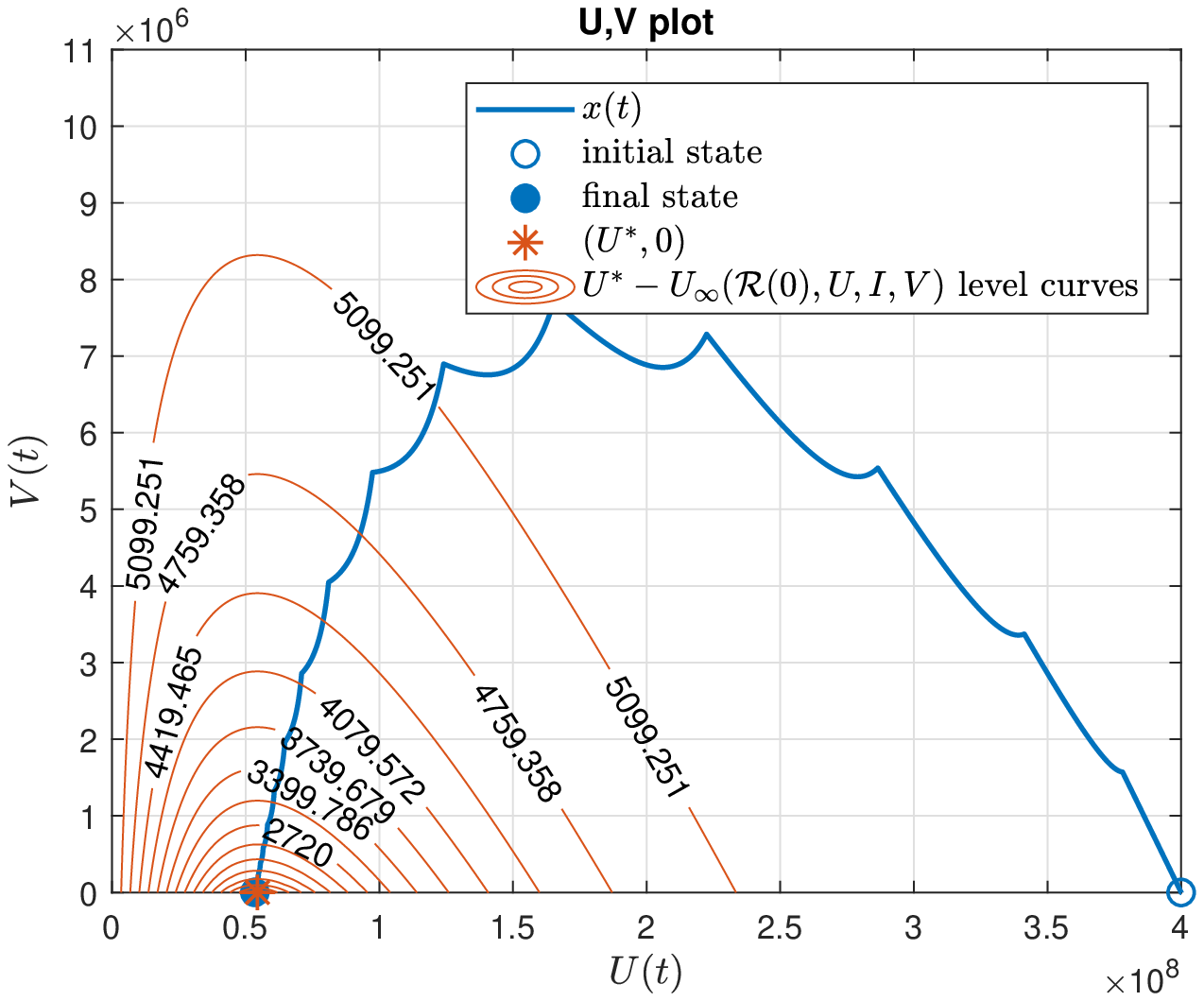}
	\caption{}
	\label{fig:qopt2}
	\end{subfigure}
	\caption{\small{(a) Time evolution of virtual patient 'A', with $u_i=u^g=10.5$ mg of antivirals. (b) Phase portrait in the $U,V$ space, and level curves of the Lyapunov function $J(U,I,V):=U^*-U_\infty(\Rn(0),U,I,V)$, around $(U^*,0)$.}}
\end{figure}

\subsection{Short-term treatment}

Figure~\ref{fig:short} shows the time evolution of $U$ (logarithmic scale), $V$, and $u_k$, when a short term treatment is implemented. The treatment starts at $t_i=4$ days and finished at $t_f=15$ days, while several doses - smaller and greater than $u^g(t_i)$ are administered: $u_i=[10, 15, 20, 25]$ mg. The values of $U_\infty$ and $\hat V$ corresponding to the four doses are given by $U_\infty = [2.98\times 10^67, 1.39\times 10^7, 5.06\times 10^7, 2.26\times 10^7]$, and $\hat V = [8.05\times 10^6, 4.98\times 10^6, 6.67\times 10^6, 1.01\times 10^1]$.

Figure~\ref{fig:short2} shows the phase portrait in the space $U,V$. Given that trajectories go along the level curves of the Lyapunov function $J(U,I,V)$, any short term treatment - i.e., producing $V(t_f) \not\approx 0$ - will make the system to surround the state $(U^*,0)$ by an outer level curve, thus finishing at some $U_\infty$ significantly smaller than $U^*$. As before, outer level curves of $J$ means both, a small $U_\infty$ and a large $\hat V$.

\begin{figure}
	\centering
	\begin{subfigure}{.5\textwidth}
		\includegraphics[width=1\textwidth]{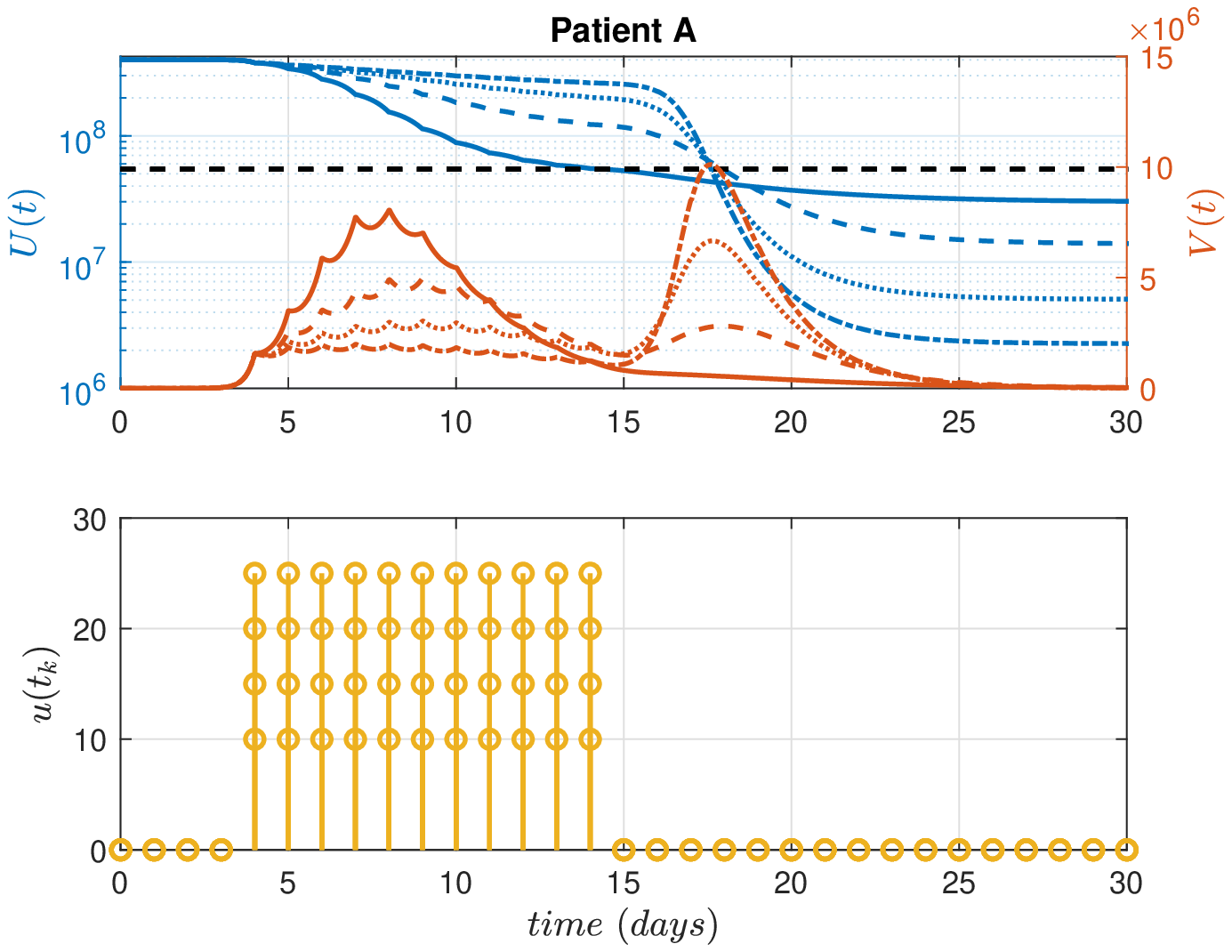}
	\caption{}
	\label{fig:short}
	\end{subfigure}%
	\hspace*{0.2truecm}
	\begin{subfigure}{.5\textwidth}
			\centering
	\includegraphics[width=1\textwidth]{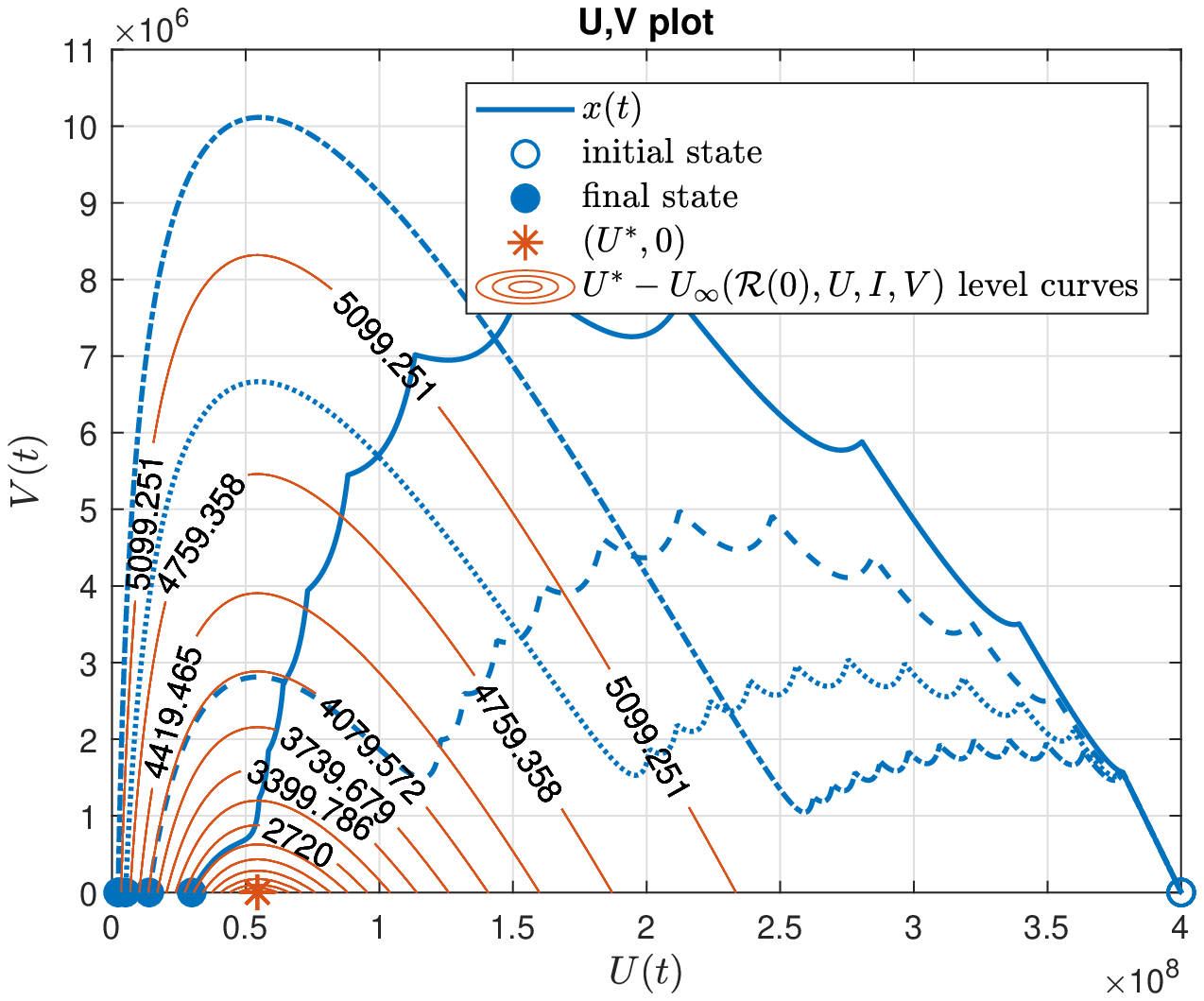}
	\caption{}
	\label{fig:short2}
	\end{subfigure}
	\caption{\small{(a) Time evolution of virtual patient 'A', with different doses of antiviral: $u_i=10$ mg, solid line, $u_i=15$ mg, dashed line, $u_i=20$ mg, dotted line, and $u_i=25$ mg, dashed-dotted line. (b) Phase portrait in the $U,V$ space, and level curves of the Lyapunov function $J(U,I,V):=U^*-U_\infty(\Rn(0),U,I,V)$, around $(U^*,0)$.}}
\end{figure}

\subsection{Two-steps treatment, lowering the peak of $V$}

Finally, a scenario is simulated to show that always it is possible to lower the the peak of $V$ - while maintaining $U_\infty \approx U^*$ - if a control sequence more complex that the single interval one is implemented. Figure \ref{fig:2step} shows the time evolution of $U$ (solid blue line, logarithmic scale) and $V$ (solid red line) corresponding to a two-steps interval control: the first step consisting in $u_i = 25$ mg, from $t_i=4$ to $t_m=30$ days, and the second one consisting in $u_i=u^g(t_m)=5.6$ mg, from $t_m=30$ to $t_f=60$ (solid line). Also, the quasi optimal single interval control of Subsection \ref{sec:optsimul} is shown, to compare the performance (dashed line). As it can be seen, the peak of $V$ is significantly reduced: from $\hat V = 7.73\times 10^6$ to $\hat V = 2.57 \times 10^6$, while $U_\infty$ is almost the same in both cases. Figure \ref{fig:2step2} shows the phase portraits of the two control strategies (solid line, two-steps control; dashed line, single interval control), where it can be seen also the reduction of the virus peak. This simple two-step strategy shows that with a more sophisticated control strategy (i.e., by means of a proper optimal control formulation) the virus peak can be arbitrarily reduced, maintaining the condition $U_\infty \approx U^*$. This is indeed, matter of future research.
\begin{figure}
	\centering
	\begin{subfigure}{.5\textwidth}
		\includegraphics[width=1\textwidth]{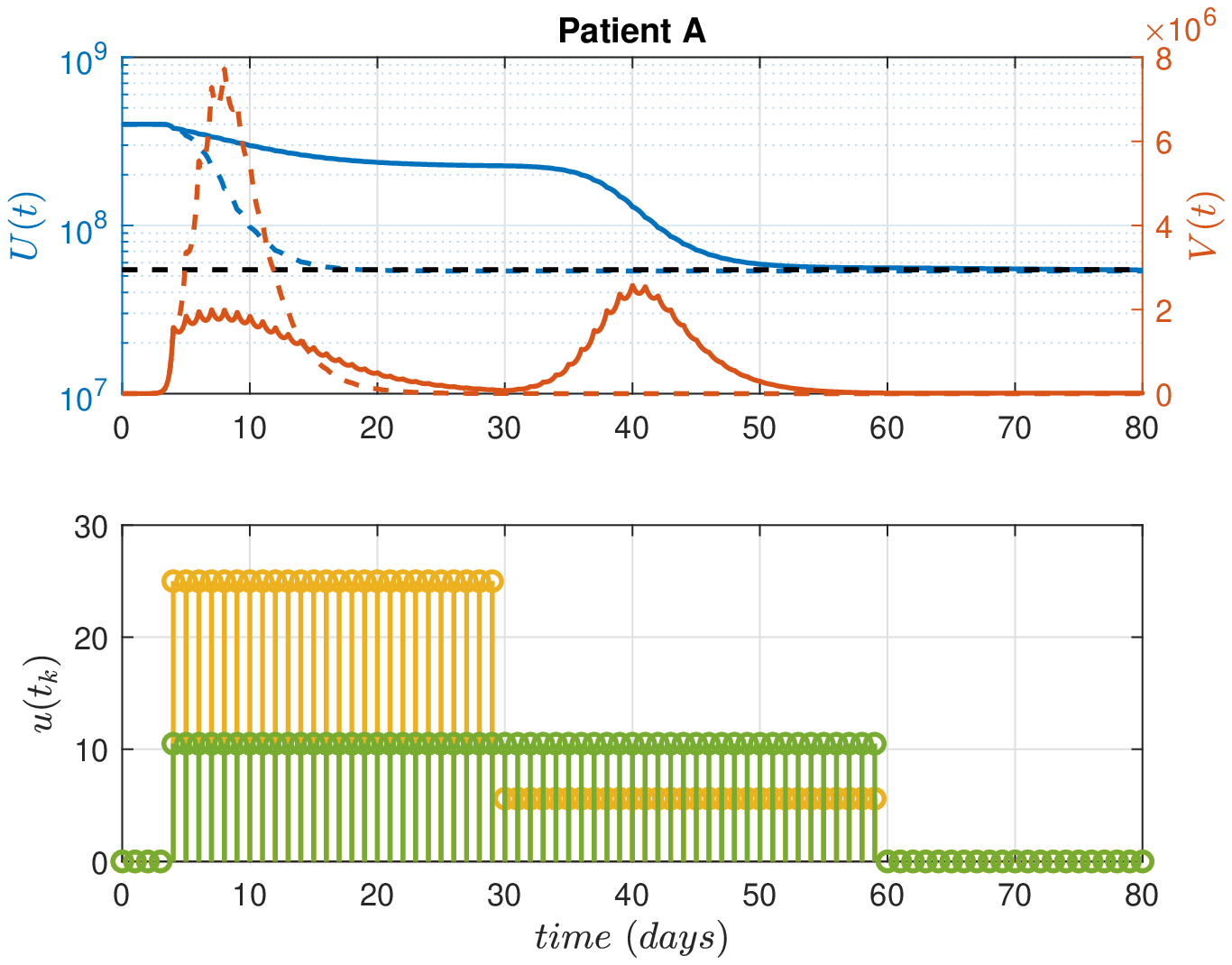}
	\caption{}
	\label{fig:2step}
	\end{subfigure}%
	\hspace*{0.2truecm}
	\begin{subfigure}{.5\textwidth}
			\centering
	\includegraphics[width=1\textwidth]{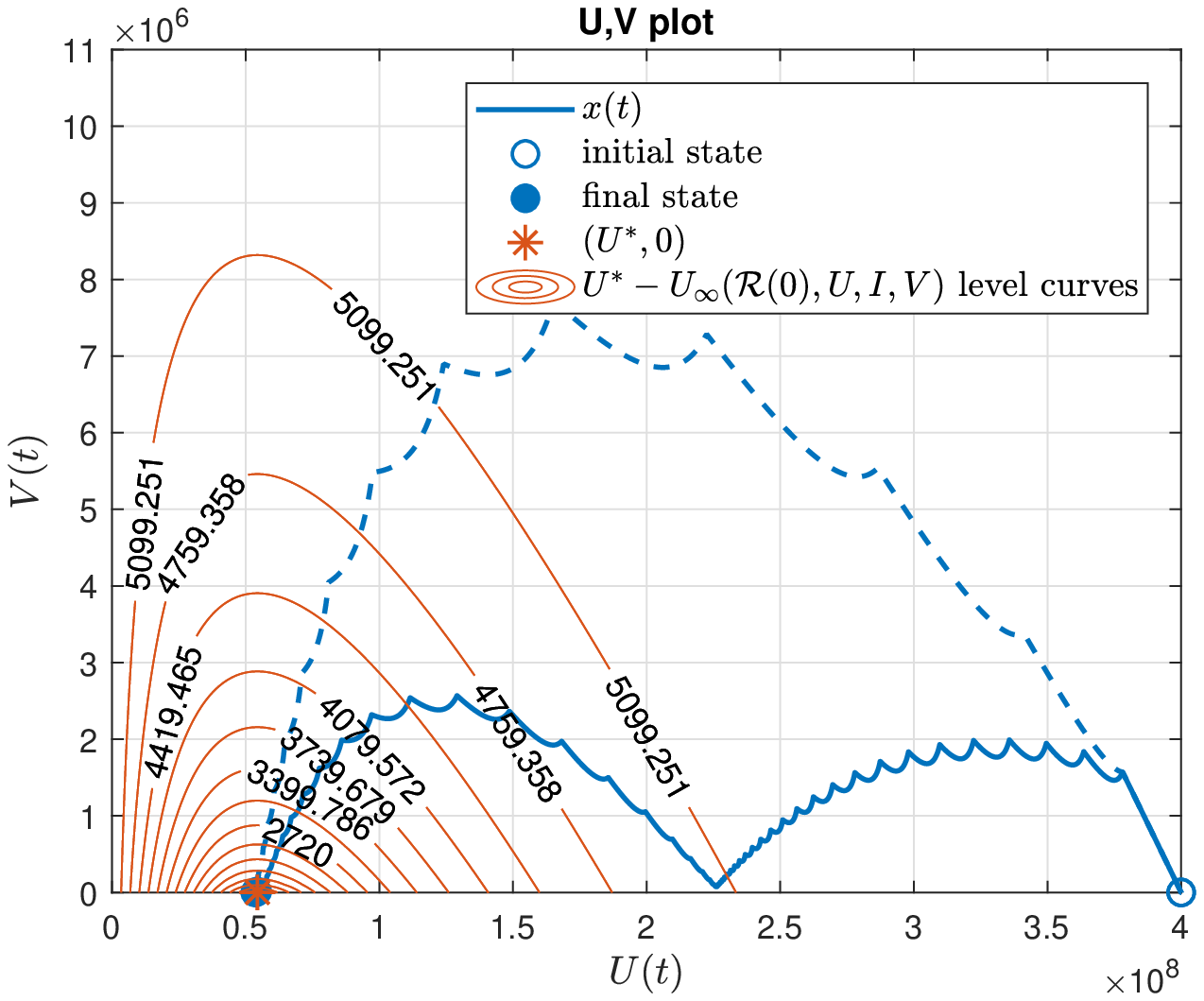}
	\caption{}
	\label{fig:2step2}
	\end{subfigure}
	\caption{\small{(a) Time evolution of virtual patient 'A', with different doses of antiviral: $u_i=25$ mg, from $t_i=4$ to $t_m=30$ days, and $u_i=u^g(t_m)=5.6$ mg, from $t_m=30$ to $t_f=60$ (solid line). In dashed line is plotted the quasi optimal single interval treatment. (b) Phase portrait in the $U,V$ space.}}
\end{figure}

\section{Conclusions and future works}\label{sec:conc}

In this work, the stability and general long term behavior of UIV-type models have been fully analysed. A quasi optimal control action - consisting in the finite-time single interval antiviral treatment producing the minimal possible final amount of death cells - was found. The analysis shows also that more complex control strategies can account for both control objectives simultaneously: minimize the virus peak, while keeping the final amount of death cells at its maximum.
A detailed analysis of subotimal scenarios permits to enumerate the following main results:
\begin{enumerate}
    \item To apply soft antiviral treatment during a long time (even no treatment at all), expecting the non-infected cells would evolve alone to the critical value $U^*$, is not an option. Open loop $U_\infty$ is in general significantly smaller than $U^*$ (particularly for the reported values of $\Rn$ for the COVID-19).
    \item To apply strong antiviral treatment for a long time, expecting the virus will die out alone is not an option. Strong antiviral produces an values of $U$ at the end of the treatment larger than $U^*$, but this final values are artificially stable steady states, since once the treatment is interrupted or reduced, a virus rebound will necessarily occurs at some future time, and $U_\infty$ will be significantly smaller than $U^*$.
    \item To apply any antiviral treatment (soft or strong) for short period of time, such that the system is not able to reach a quasi steady state (i.e., when $V$ at the end of the treatment is not close to zero) is not an option. If the treatment is interrupted at a transient state, the initial conditions for the next time period are such that $U_\infty$ will be significantly smaller than $U^*$.
    \item According to the latter results, the best option is to apply an antiviral treatment such that the system reaches a quasi steady state with $U\approx U^*$ and $V\approx 0$ at the end of the treatment. This is what we call "the quasi optimal single interval antiviral treatment", since it makes the system to approach the maximal final value of uninfected cells ($U_\infty \approx U^*$), without infection rebounds.
    \item An important point to be remarked is that the quasi optimal single interval antiviral treatment does not determine the peak of the virus. Quasi optimal conditions for $U_\infty$ are stationary, while condition for minimizing $\hat V$ are transitory, so both objective can be accounted for simultaneously.
\end{enumerate}

Future works include the study of more complex control strategies (mainly model based control strategies as MPC and similar) and the explicit consideration of time-varying immune system. 

\section*{Appendix \hypertarget{sec:app1}{1}: Stability theory}

In this section some basic definitions and results are given concerning the asymptotic stability of sets and Lyapunov theory,
in the context of non-linear continuous-time systems (\cite{rawlings2017model}, Appendix B). All the following definitions are referred to system 
\begin{eqnarray}\label{eq:difeqini}
\dot x (t) = f(x(t)),~~x(0)=x_0,
\end{eqnarray}
where $x$ is the system state constrained to be in $\setX \subseteq \R^n$, $f$ is a Lipschitz continuous nonlinear function,
and $\phi(t;x)$ is the solution for time $t$ and initial condition $x$.
\begin{definition}[Equilibrium set]
	Consider system~\ref{eq:difeqini} constrained by $\setX$. The set $\setX_s \subset \setX$ is an equilibrium set if each point $x \in \setX_s$ is such that $f(x) =  0$ (this implying that $\phi(t;x)  =  x$ for all $t \geq 0$).
\end{definition}
\begin{definition}[Attractivity of an equilibrium set]\label{def:attrac_set}
	Consider system~\ref{eq:difeqini} constrained by $\setX$ and a set $\setY \subseteq \setX$. A closed equilibrium set $\setX_s \subset \setX$ is attractive in $\setY$ if $\lim_{t \rightarrow \infty} \|\phi(t;x)\|_{\setX_s} =0$ for all $x \in \setY$. If  $\setY$ is a $\varepsilon$-neighborhood of $\setX_s$ for some $\eta>0$, we say that $\setX_s$ is locally attractive.
\end{definition}
We define the \textit{domain of attraction} (DOA) of an attractive set $\setX_s$ for the system~\ref{eq:difeqini} to be the set of all initial states $x$ such that $\|\phi(t;x)\|_{\setX_s} \to 0$ as $t\to\infty$. We use the term \textit{region of attraction} to denote any set of initial states contained in the domain of attraction.

A closed subset of an attractive set (for instance, a single equilibrium point) is not necessarily attractive. On the other hand, any set containing an attractive set is attractive, so the significant attractivity concept in a constrained system is given by the smallest one\footnote{Given two different attractive sets in $\setX$ with the same DOA, one must be contained in the other. So the family of all attractive sets in $\setX$ with the same DOA is a totally ordered set under the set inclusion (nested family). An arbitrary (finite, countable, or uncountable) intersection of nested nonempty closed subsets of a compact space is a nonempty compact set~\cite{kelley2017general}. Then if one element of the family is bounded, and therefore compact, the intersection of all the family is a nonempty compact set. This set is the \textit{smallest atractive set}.}.

\begin{definition}[Local $\epsilon - \delta$ stability of an equilibrium set]\label{def:eps_del_stab}
	Consider system~\ref{eq:difeqini} constrained by $\setX$. A closed equilibrium set $\setX_s \subset \setX$ is $\epsilon-\delta$ locally stable if for all $\epsilon >0$ there exists $\delta>0$ such that if  $\|x\|_{\setX_s}<\delta$ then $\|\phi(t;x)\|_{\setX_s} < \epsilon$, for all $t \geq 0$.
\end{definition}

Unlike attractive sets, a set containing a locally $\epsilon-\delta$ stable equilibrium set is not necessarily locally $\epsilon-\delta$ stable. Even more, a closed subset of a locally $\epsilon-\delta$ stable equilibrium set (for instance, a single equilibrium point) is not necessarily locally $\epsilon-\delta$  stable. However, any (finite) union of equilibrium sets locally $\epsilon-\delta$ stable is also locally $\epsilon-\delta$ stable. So the significant stability concept in a constrained system is given by the largest one.

Although a finite union of equilibrium set locally $\epsilon-\delta$ stable is also locally $\epsilon-\delta$ stable, in general we cannot extend this result to the case of arbitrary unions of points. Thus, even when every equilibrium point of an equilibrium set is locally $\epsilon-\delta$ stable, we cannot assure that the whole set would be locally $\epsilon-\delta$ stable. This is due to the fact that given a fixed $\epsilon$ the $\delta$ chosen for each point depend on the point and so the infimum of them could be zero.  However, if in addition we also assume that the set is compact, then the stability of the set can be inherited from the stability of its points.

\begin{lemma}\label{lem:stab}
     Let $\setX_s$ be a compact equilibrium set. If every $x_s\in \setX_s$ is $\epsilon-\delta$ locally stable, then $\setX_s$ is $\epsilon-\delta$ locally stable.
\end{lemma}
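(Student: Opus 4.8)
The plan is to argue by contradiction, exploiting compactness of $\setX_s$ to extract a convergent subsequence and then localize the failure of stability at a single point, contradicting the assumed $\epsilon-\delta$ stability of that point. First I would fix $\epsilon>0$ and suppose, for contradiction, that $\setX_s$ is not $\epsilon-\delta$ locally stable for this $\epsilon$; this means that for every $\delta>0$ there is a state $x$ with $\|x\|_{\setX_s}<\delta$ whose trajectory eventually escapes the $\epsilon$-neighborhood of $\setX_s$. Taking $\delta=1/n$, this yields a sequence $x_n$ with $\|x_n\|_{\setX_s}<1/n$ and times $t_n\geq0$ such that $\|\phi(t_n;x_n)\|_{\setX_s}\geq\epsilon$.

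Next I would use compactness of $\setX_s$ twice. Since $\|x_n\|_{\setX_s}\to0$, for each $n$ pick $y_n\in\setX_s$ with $\|x_n-y_n\|=\|x_n\|_{\setX_s}<1/n$; by compactness of $\setX_s$, passing to a subsequence we may assume $y_n\to y^*\in\setX_s$, and hence $x_n\to y^*$ as well. Now I would invoke $\epsilon-\delta$ stability of the single point $y^*$: applied with the tolerance $\epsilon/2$ (say), it provides a $\delta^*>0$ such that any trajectory starting within $\delta^*$ of $y^*$ stays within $\epsilon/2$ of $y^*$ — hence within $\epsilon/2$ of $\setX_s$ — for all $t\geq0$. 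But for $n$ large enough, $\|x_n-y^*\|\leq\|x_n-y_n\|+\|y_n-y^*\|<\delta^*$, so $\|\phi(t;x_n)\|_{\setX_s}\leq\|\phi(t;x_n)-y^*\|<\epsilon/2<\epsilon$ for all $t\geq0$, contradicting $\|\phi(t_n;x_n)\|_{\setX_s}\geq\epsilon$. This contradiction shows that such an $\epsilon$ cannot exist, so $\setX_s$ is $\epsilon-\delta$ locally stable.

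The step I expect to be the main (though modest) obstacle is making the localization rigorous: one must be careful that $\epsilon-\delta$ stability of the point $y^*$ is stated relative to $y^*$ itself (distance $\|\cdot-y^*\|$), while the conclusion we need is stated relative to the set $\setX_s$ (distance $\|\cdot\|_{\setX_s}$). The bridge is the trivial inequality $\|z\|_{\setX_s}\leq\|z-y^*\|$ valid because $y^*\in\setX_s$, which lets us pass from the pointwise bound to the set bound in one direction — and that is the only direction needed here. The remaining care is purely bookkeeping with the two nested subsequences and the triangle inequality $\|x_n-y^*\|\leq\|x_n-y_n\|+\|y_n-y^*\|$; no properties of $f$ beyond existence and uniqueness of solutions (already assumed for \eqref{eq:difeqini}) are required, since we never need continuity of $\phi$ in the initial condition — the point stability hypothesis does all the work uniformly in $t$.
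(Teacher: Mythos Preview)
Your proof is correct and uses a genuinely different route from the paper. The paper argues directly: for a fixed $\epsilon$, it takes the open cover $\{B_{\delta(x_s)}(x_s):x_s\in\setX_s\}$ of $\setX_s$ given by pointwise stability, sets $V$ equal to its union, and then invokes the fact that a compact set and a disjoint closed set have strictly positive distance to obtain $\delta^*:=d(\setX_s,V^c)>0$; any $x$ with $\|x\|_{\setX_s}<\delta^*$ then lies in some $B_{\delta(x_s)}(x_s)$, and hence its trajectory stays in $B_\epsilon(x_s)\subset B_\epsilon(\setX_s)$. Your argument instead uses sequential compactness and contradiction: assuming failure for some $\epsilon$, you extract a subsequence of near-violating initial states converging to a single $y^*\in\setX_s$, and then the pointwise stability of $y^*$ alone rules out the escape. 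The paper's approach is constructive (it produces an explicit $\delta^*$) and emphasizes the covering viewpoint; yours is arguably more elementary, needing only Bolzano--Weierstrass and the trivial inequality $\|z\|_{\setX_s}\le\|z-y^*\|$ you already flagged. One minor wording point: your opening ``fix $\epsilon>0$ and suppose, for contradiction'' would read more precisely as ``suppose there exists $\epsilon>0$ for which no $\delta$ works''; your closing sentence makes clear you mean the latter, so this is cosmetic rather than a gap.
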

\begin{proof}
Given $\epsilon>0$, there exists  $\delta=\delta(x_s)>0$ for each $x_s\in\setX_s$ such that if $x\in B_{\delta(x_s)}(x_s)$ then $\phi(t;x)\in B_\epsilon(x_s)$ for $t\ge 0$. The family of $\delta$-balls form a open cover of $\setX_s$. Let us denote the union of this cover $V$, i.e. $V:=\bigcup\{B_{\delta(x_s)}(x_s): x_s\in\setX_s\}$. Since $\setX_s$ is compact and the complement of $V$ is closed, then the distance between them is strictly positive, i.e. $\delta^*:=d(\setX_s,V^c)>0$. Therefore, the $\delta^*$ neighborhood of the equilibrium set $\setX_s$ is contained in $V$. Thus if $x\in B_{\delta^*}(\setX_s)\subset V$ then $\phi(t;x)\in B_\epsilon(\setX_s)$ for $t\ge 0$ .Therefore $\setX_s$ is $\epsilon-\delta$ locally stable.
\end{proof}

\begin{definition}[Asymptotic stability (AS) of an equilibrium set]\label{def:AS}
	Consider system~\ref{eq:difeqini} constrained by $\setX$ and a set $\setY \subseteq \setX$. A closed equilibrium set $\setX_s \subset \setX$ is asymptotically stable (AS) in $\setY$ if it is $\epsilon-\delta$ locally stable and attractive in $\setY$.
\end{definition}

Next, the theorem of Lyapunov, which refers to single equilibrium points and provides sufficient conditions for both, local $\epsilon-\delta$  stability and assymptotic stability, is introduced.
\begin{theorem}\label{theo:lyap}\emph{(Lyapunov's stablity theorem \cite[Theorem~4.1]{khalil2002nonlinear})}
	Consider system~\ref{eq:difeqini} constrained by $\setX$ and an equilibrium state $x_s \in \setX_s$. Let $\setY\subset\setX$ be a neighborhood of $x_s$ and consider a	function $V(x): \setY \rightarrow\R$ such that $V(x)>0$ for $x \neq x_s$, $V(x_s)=0$ and $\dot V(x(t)) \leq 0$, denoted as Lyapunov function.
	Then, the existence of such a function in a neighborhood of $x_s$ implies that $x_s \in \setX_s$ is locally $\epsilon-\delta$ stable in $\setY$. If in addition
	$\dot{V}(x(t)) < 0$ for all $x \neq x_s$, then $x_s$ is asymptotically stable in $\setY$.
\end{theorem}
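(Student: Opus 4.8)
The plan is to follow the standard sublevel-set (invariant-set) argument for Lyapunov's theorem, assuming as usual that $V$ is continuously differentiable on $\setY$, so that $\dot V(x)=\nabla V(x)^\top f(x)$ is continuous (using that $f$ is Lipschitz, hence continuous). \emph{Step 1 ($\epsilon$-$\delta$ stability).} Fix $\epsilon>0$ and choose $r\in(0,\epsilon]$ with $\{x:\|x-x_s\|\le r\}\subseteq\setY$. By continuity of $V$, compactness of the sphere $\{x:\|x-x_s\|=r\}$, and $V(x)>0$ for $x\ne x_s$, the constant $\alpha:=\min_{\|x-x_s\|=r}V(x)$ is strictly positive; pick $\beta\in(0,\alpha)$ and set $\Omega_\beta:=\{x:\|x-x_s\|\le r,~V(x)\le\beta\}$. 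Since $V(x_s)=0<\beta$, the set $\Omega_\beta$ is a neighborhood of $x_s$, and since $V\ge\alpha>\beta$ on the bounding sphere, $\Omega_\beta\subseteq\setB_r(x_s)$. Because $t\mapsto V(\phi(t;x))$ is non-increasing (as $\dot V\le0$), a trajectory starting in $\Omega_\beta$ can never reach $\|x-x_s\|=r$ (that would force $V\ge\alpha>\beta$), so $\Omega_\beta$ is positively invariant. Finally $x_s\in\inti(\Omega_\beta)$ yields $\delta>0$ with $\setB_\delta(x_s)\subseteq\Omega_\beta$; hence $\|x-x_s\|<\delta$ implies $\phi(t;x)\in\Omega_\beta\subseteq\setB_r(x_s)\subseteq\setB_\epsilon(x_s)$ for all $t\ge0$, which is the claimed local $\epsilon$-$\delta$ stability.

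\emph{Step 2 (asymptotic stability).} Now assume in addition $\dot V(x(t))<0$ for $x\ne x_s$, and take $x\in\Omega_\beta$. By Step 1 the trajectory stays in the compact set $\Omega_\beta$, and $V(\phi(t;x))$ is non-increasing and bounded below by $0$, so it converges to some $c\ge0$. I would rule out $c>0$ by contradiction: if $c>0$, continuity of $V$ gives $d>0$ with $V<c$ on $\setB_d(x_s)$, so the trajectory remains in the compact annulus $A:=\{x\in\Omega_\beta:\|x-x_s\|\ge d\}$, on which the continuous function $\dot V$ attains a maximum $-\gamma<0$; integrating yields $V(\phi(t;x))\le V(x)-\gamma t$, which eventually violates $V\ge0$. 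Hence $V(\phi(t;x))\to0$, and a second compactness argument upgrades this to $\phi(t;x)\to x_s$: if $\|\phi(t_n;x)-x_s\|\ge\epsilon'$ along some $t_n\to\infty$, then by compactness of $\Omega_\beta$ a subsequence converges to $\bar x\ne x_s$ with $V(\bar x)>0$, contradicting $V(\phi(t;x))\to0$. Together with Step 1 this gives asymptotic stability of $x_s$, with $\Omega_\beta$ a region of attraction.

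The routine part is the bookkeeping with sublevel sets; the \textbf{main obstacle} is establishing the two strict-positivity estimates via compactness: the constant $\alpha>0$ on the sphere (requiring continuity of $V$ together with the fact that $V$ vanishes only at $x_s$) and the uniform decay rate $\gamma>0$ on the annulus $A$ (requiring $\dot V$ continuous so that it attains its maximum on the compact set $A$; if $V$ were only continuous one would need a different argument). I would also note that the conclusion delivered is asymptotic stability on \emph{some} neighborhood of $x_s$, namely any sublevel set $\Omega_\beta$ contained in $\setY$, so that the domain of attraction equals all of $\setY$ only under the additional assumption that $\setY$ itself is a bounded sublevel set of $V$.
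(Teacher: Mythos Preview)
The paper does not actually prove this theorem: it is stated in Appendix~1 as a classical result and simply cited from Khalil~\cite[Theorem~4.1]{khalil2002nonlinear}. Your proposal is correct and reproduces precisely the standard sublevel-set argument that appears in Khalil, so there is nothing to compare against beyond noting that your proof is the one the citation points to.
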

%

\section*{Appendix \hypertarget{sec:app2}{2}: Maximum of $U_\infty$}

As mentioned previously, $U_\infty$ can be expressed as a function of $U$, $I$ and $V$ as follows
\begin{eqnarray} \label{eq:Sinf1}
	U_\infty(U,I,V)= -\frac{W(-\Rn Ue^{-\Rn(U + I +\frac\delta p V)})}{\Rn} 
\end{eqnarray}
with $\Rn$, $\delta$ and $p$ fixed. For each $\varepsilon\ge0$ let us define a domain of $\setX$ given by  
\begin{eqnarray} \label{eq:Omega.ep}
	\Omega(\varepsilon) = \left\{(U,I,V) \in \setX:  I\ge\varepsilon,V\ge\varepsilon\right\}.
\end{eqnarray}
The following Lemma describe the behavior of the maximum of $U_\infty$ on each $\Omega(\varepsilon)$.

\begin{lemma}[Maximum of the function $U_\infty$]\label{lem:Uinf_opt}
	Consider the function $U_\infty$ given by~\eqref{eq:Sinf1} and for each $\varepsilon\ge0$ the domains $\Omega(\varepsilon)$ given by~\eqref{eq:Omega.ep}.  Then the maximum of $U_\infty(U,I,V)$ in $\Omega(\varepsilon)$ is reached in $(U^*,\varepsilon,\varepsilon)$.
    In particular, the maximum value of $U_\infty$ over $\Omega(0)$ is reached in $(U^*,0,0)$ and is given by $U_\infty(U^*,0,0) = U^*$, where $U^* = 1/\Rn$.
\end{lemma}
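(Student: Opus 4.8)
The plan is to reduce the three–variable optimization to the scalar behaviour of the principal branch of the Lambert function. Writing $z(U,I,V):=-\Rn\,U\,e^{-\Rn(U+I+\frac{\delta}{p}V)}$, formula \eqref{eq:Sinf1} reads $U_\infty(U,I,V)=-W(z(U,I,V))/\Rn$. The first step is to confirm that $z$ never leaves the domain $[-1/e,0)$ on which the principal branch is defined and increasing: if $U=0$ then $z=0$, $W(0)=0$ and $U_\infty=0$; if $U>0$ then $z<0$ and $|z|=\bigl(\Rn U e^{-\Rn U}\bigr)\,e^{-\Rn(I+\frac{\delta}{p}V)}\le \Rn U e^{-\Rn U}\le 1/e$, the last inequality because $t\mapsto \Rn t e^{-\Rn t}$ attains its global maximum $1/e$ at $t=1/\Rn=U^*$. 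Hence $z\in[-1/e,0)$ throughout $\setX$ and $U_\infty$ is indeed the value coming from the principal branch.

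Next I would exploit that $W$ is strictly increasing on $[-1/e,\infty)$ (equivalently, bijective from $(-1/e,0)$ onto $(-1,0)$, as already used in the proof of Theorem~\ref{theo:attract}), so that $z\mapsto -W(z)/\Rn$ is strictly decreasing. Therefore maximizing $U_\infty$ over $\Omega(\varepsilon)$ given by \eqref{eq:Omega.ep} is equivalent to minimizing $z$, i.e.\ to maximizing $|z|=\Rn U e^{-\Rn(U+I+\frac{\delta}{p}V)}$ on $\Omega(\varepsilon)$. Now the monotonicity splits cleanly by variable: for fixed $U$ the factor $e^{-\Rn(U+I+\frac{\delta}{p}V)}$ is strictly decreasing in $I$ and in $V$, so any maximizer must take $I=\varepsilon$ and $V=\varepsilon$; and substituting those values, the remaining problem is to maximize $U\mapsto \Rn U e^{-\Rn U}$ over $U\ge0$, whose derivative $\Rn e^{-\Rn U}(1-\Rn U)$ vanishes only at $U=1/\Rn=U^*$, a strict global maximum. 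This shows the (unique) maximizer of $U_\infty$ on $\Omega(\varepsilon)$ is $(U^*,\varepsilon,\varepsilon)$.

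For the stated special case $\varepsilon=0$, I would simply evaluate at $(U^*,0,0)$: there $z=-\Rn U^* e^{-\Rn U^*}=-e^{-1}=-1/e$, and since $W(-1/e)=-1$ we get $U_\infty(U^*,0,0)=-(-1)/\Rn=1/\Rn=U^*$. The step that needs the most care is the domain/branch bookkeeping in the first paragraph: one must verify that the argument of $W$ stays in $[-1/e,0)$ and that the branch in play is the principal one, because it is exactly this that (i) legitimizes the "$U_\infty$ decreasing in $z$'' reduction used throughout and (ii) guarantees $U_\infty(U^*,0,0)=U^*$ rather than the spurious value $-W_{-1}(-1/e)/\Rn$ that the secondary branch would produce. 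Everything else is elementary single–variable calculus.
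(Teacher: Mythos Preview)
Your proof is correct and follows essentially the same route as the paper: both reduce to maximizing the argument of $W$ via the monotonicity of the principal branch, then separate variables to see that $I,V$ must sit at their lower bounds $\varepsilon$ and that $U\mapsto \Rn U e^{-\Rn U}$ peaks at $U=U^*$. The paper packages the last step through the change of variables $x=\Rn U$, $y=\Rn(I+\tfrac{\delta}{p}V)$ and a gradient/boundary argument for $g(x,y)=xe^{-(x+y)}$, whereas you do it directly; your explicit verification that $z\in[-1/e,0)$ (so the principal branch is indeed in play) is a welcome piece of bookkeeping that the paper leaves implicit.
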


\begin{proof}
According to \eqref{eq:Sinf1}, $U_\infty$ can be written as
\[	U_\infty(U,I,V)= -\frac{W(-f(U,I,V))}{\Rn}, \]
with $f(U,I,V)=\Rn U e^{-\Rn(U + I+\delta/pV)}$. Since $-W(-\cdot)$ is an increasing (injective) function then $U_\infty(U,I,V)$ achieves its maximum over $\Omega(\varepsilon)$ at the same values as $f(U,I,V)$. Then, we focus our attention in finding the maximum (and the maximizing variables) of $f(U,I,V)$.
    
Through the change of variables $x = \Rn U$ and $y = \Rn(I+\delta/pV)$, $f$ can be studied as a function of the form $g(x,y) = xe^{-(x+y)}$.
Note that $(U,I,V)\in\Omega(\varepsilon)$ if and only if $x\ge0$ and $y\ge\eta$  where $\eta:=\Rn(1+\tfrac{\delta}{p})\varepsilon\ge0$. 
Therefore to find extremes of $f$ in $\Omega(\varepsilon)$ it is enough to study the extreme points of $g$ over $\Omega'=\{(x,y)\in\R^2_{\ge0}:~y\ge\eta\}$.
    
Since $\nabla g=[(1-x) e^{-(x+y)} , -xe^{-(x+y)}]$
does not vanish and $g\to0$ when $\|(x,y)\|\to\infty$, then the maximum is reached at the boundaries of $\Omega'$. 
A simple analysis shows that $g$ restricted to the boundary of $\Omega'$ achieves its maximum in $(1,\eta)$.
This means that $f(U,I,V)$ achieves its maximum in $U = 1/\Rn =U^*$ and $I =V=\varepsilon$.

In particular, when $\varepsilon = 0$,  $f(U,I,V)$ reaches its maximum in $(U^*,0,0)$. Furthermore, $$U_\infty(U^*,0,0)  = -\frac{W(-f(U^*,0,0))}{\Rn} = -\frac{W(-1/e)}{\Rn} = \frac1\Rn = U^*,$$ 
which concludes the proof.
\end{proof}

\begin{figure}
	\centering
	\includegraphics[width=0.75\columnwidth]{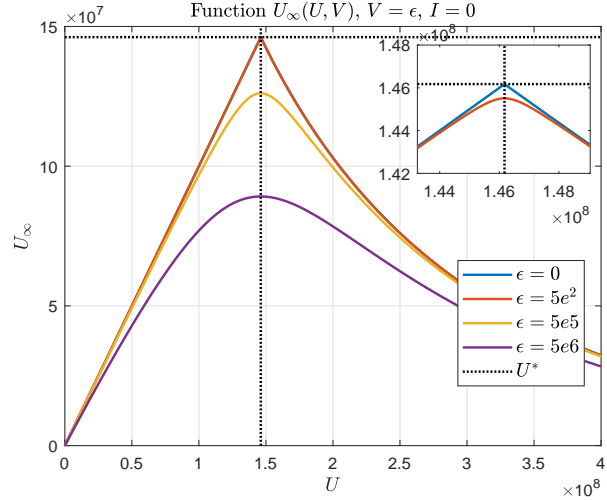}
	\caption{\small{Function $U_\infty(U,V)$, for different values of $\epsilon$, when $U \in [0,U_{\max}]$, $V = \epsilon$ and $I = 0$. As it can be seen, the supremum of $U_\infty$ (given by $U^*$) is achieved when $U=U^*$ and $I\rightarrow 0$.}}
	\label{fig:Uinf_U}
\end{figure}


\bibliographystyle{elsarticle-num} 
\bibliography{biblioIR}

\end{document}